\documentclass[11pt]{article}
\usepackage{amsmath,amsthm}
\usepackage{amssymb,mathrsfs}
\usepackage{bm,mathtools}
\usepackage{xcolor}
\usepackage{geometry}
\usepackage[colorlinks=true,linkcolor=blue,citecolor=blue,
urlcolor=blue]{hyperref}

\makeatletter
\def\@seccntDot{.}
\def\@seccntformat#1{\csname the#1\endcsname\@seccntDot\hskip 0.5em}
\renewcommand\section{\@startsection{section}{1}{\z@}%
	{18\p@ \@plus 6\p@ \@minus 3\p@}%
	{9\p@ \@plus 6\p@ \@minus 3\p@}%
	{\large\bfseries\boldmath}}
\renewcommand\subsection{\@startsection{subsection}{2}{\z@}%
	{12\p@ \@plus 6\p@ \@minus 3\p@}%
	{3\p@ \@plus 6\p@ \@minus 3\p@}%
	{\bfseries\boldmath}}
\renewcommand\subsubsection{\@startsection{subsubsection}{3}{\z@}%
	{12\p@ \@plus 6\p@ \@minus 3\p@}%
	{\p@}%
	{\bfseries\boldmath}}
\makeatother

\usepackage{microtype}

\theoremstyle{plain}
\newtheorem{thm}{Theorem}[section]
\newtheorem{lem}[thm]{Lemma}
\newtheorem{problem}[thm]{Problem}

\theoremstyle{definition}
\newtheorem{definition}[thm]{Definition}

\numberwithin{equation}{section}
\allowdisplaybreaks
\newcommand{\ex}{\operatorname{ex}}
\newcommand{\EX}{\operatorname{EX}}
\newcommand{\SPEX}{\operatorname{SPEX}}
\newcommand{\spex}{\operatorname{spex}}

\title{Sharp spectral Moon--Moser-type theorems in the linear range via feasible graph parameters}
\author{
Yang Hu\thanks{Email address: \texttt{yang.hu@mail.nankai.edu.cn}}\\
College of Computer Science, Nankai University, Tianjin, 300350, China
}
\date{}

\begin{document}
\maketitle

\begin{abstract}
	Moon and Moser proved a sharp edge-extremal theorem for Hamilton cycles in balanced bipartite graphs with minimum degree at least $k$. Li and Ning obtained spectral analogues for Hamiltonicity in balanced bipartite graphs of order $2n$ and for traceability in nearly balanced bipartite graphs with part sizes $n$ and $n-1$, under the assumption $n\ge (k+1)^2$. We show that their sharp spectral thresholds remain valid in the linear ranges $n\ge 2k$ and $n\ge 2k+1$, respectively.
	
	More precisely, we determine the extremal values of the adjacency spectral radius and the signless Laplacian spectral radius for non-Hamiltonian balanced bipartite graphs with minimum degree $\delta(G)\ge k$, and for non-traceable nearly balanced bipartite graphs with $\delta(G)\ge k$. In each case, the extremal graph is unique up to isomorphism.
	
	Our proof is based on feasible graph parameters: parameters that increase under edge addition and are nondecreasing under Kelmans operations. This yields Moon--Moser type extremal theorems for a general class of parameters, from which the spectral results follow.
	\par\vspace{2mm}
	
	\noindent{\bfseries Keywords:} Hamiltonicity; traceability; balanced bipartite graph; nearly balanced bipartite graph; spectral analogues; feasible graph parameter
	\par\vspace{2mm}
	
	\noindent{\bfseries 2020 Mathematics Subject Classification:}  05C35; 05C45; 05C50
\end{abstract}

\section{Introduction}

For a graph $G$, let $V(G)$ and $E(G)$ denote the vertex set and edge set of a graph $G$, respectively. Let $N_G(v)$ and $d_G(v)=|N_G(v)|$ denote the neighborhood and the degree of a vertex $v$, respectively. For $S\subseteq V(G)$, let $N_G(S):=\bigcup_{v\in S}N_G(v)$.
Let $\delta(G)$ denote the minimum degree of $G$, and let $e(G)=|E(G)|$ denote its number of edges. 
If $G$ is bipartite with partite sets $X$ and $Y$, we write $G=G[X,Y]$. 
For $U\subseteq X$ and $W\subseteq Y$, let $G[U,W]$ denote the bipartite subgraph consisting of all edges of $G$ between $U$ and $W$. 
We write $K_{a,b}$ for the complete bipartite graph with part sizes $a$ and $b$. A balanced bipartite graph of order $2n$ is a bipartite graph $G=G[X,Y]$ with $|X|=|Y|=n$. Throughout the paper, a nearly balanced bipartite graph of order $2n-1$ means a bipartite graph $G=G[X,Y]$ with $|X|=n$ and $|Y|=n-1$.

Hamiltonian problems are among the central topics in graph theory. A graph $G$ is called Hamiltonian (traceable) if it contains a Hamilton cycle (Hamilton path). From the extremal point of view, a natural inverse problem is to determine the maximum number of edges in a graph with minimum degree at least $k$ that contains no Hamilton cycle or Hamilton path. This line of research goes back to Erd\H{o}s~\cite{Erdos1962}, who determined the maximum number of edges in a non-Hamiltonian graph with minimum degree at least $k$.

For balanced bipartite graphs, Moon and Moser~\cite{MoM1963} proved the following sharp edge-extremal theorem relating the minimum degree and the number of edges for Hamilton cycles.

\begin{thm}[Moon and Moser~\cite{MoM1963}]\label{thm:moon-moser}
Let $G$ be a balanced bipartite graph of order $2n$ with $\delta(G)\ge k$, where
$1\le k\le n/2$. If
\[
e(G)>
\max\left\{
n(n-k)+k^2,\,
n\left(n-\left\lfloor\frac n2\right\rfloor\right)
+\left\lfloor\frac n2\right\rfloor^2
\right\},
\]
then $G$ is Hamiltonian.
\end{thm}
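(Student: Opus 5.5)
I will argue via the bipartite closure and a degree-sequence condition in the spirit of Chvátal/Moon--Moser. Suppose $G=G[X,Y]$ is a non-Hamiltonian balanced bipartite graph of order $2n$ with $\delta(G)\ge k$. First I add edges between $X$ and $Y$ one at a time, as long as the graph stays non-Hamiltonian, until no further edge can be added. The resulting graph $G'$ has $\delta(G')\ge k$ and $e(G')\ge e(G)$, and adding any missing edge $xy$ with $x\in X$, $y\in Y$ creates a Hamilton cycle. So $G'$ has a Hamilton $x$--$y$ path. The standard rotation/crossing argument on this path then gives $d(x)+d(y)\le n$ for every nonadjacent pair $x\in X$, $y\in Y$. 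It therefore suffices to bound $e(G')$.

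Next I use this to extract a structural obstruction, as in the bipartite version of Chvátal's condition. Order the degrees on each side and take a nonadjacent pair $x\in X$, $y\in Y$ with $d(x)+d(y)$ maximal, say $d(x)=m\le d(y)$. Then $m\le n/2$ and $m\ge k$. Consider the set $S\subseteq Y$ of vertices not adjacent to $x$. It has $n-m$ vertices, and each $v\in S$ satisfies $d(v)\le n-m$ by maximality. Symmetrically, the set $T\subseteq X$ of non-neighbors of $y$ has $n-d(y)\ge m$ vertices, each of degree at most $m$. This yields a family of roughly $m$ vertices on one side of degree at most $m$ (the Moon--Moser Hall-type obstruction). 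Concretely, I expect to find $m$ vertices in $X$ of degree at most $m$, which gives
\[
e(G')\le m\cdot m+(n-m)\cdot n = n(n-m)+m^2 .
\]
I will justify this by counting edges from the $X$-side: the $m$ low-degree vertices contribute at most $m^2$, and the rest contribute at most $n$ each.

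Finally, $f(m)=n(n-m)+m^2$ is a convex function of $m$ on the interval $k\le m\le \lfloor n/2\rfloor$. Its maximum over this range is therefore attained at an endpoint, which is exactly the maximum in the statement. This contradicts the assumed lower bound on $e(G)$, so $G$ is Hamiltonian.

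The main obstacle is the middle step. Passing from the pairwise condition $d(x)+d(y)\le n$ to a clean count of $m$ low-degree vertices on one side needs care: one must choose the extremal pair correctly and use the degree-sequence (Chvátal-type) bipartite lemma, namely that if $G$ is non-Hamiltonian then for some $m\le n/2$ there are $m$ vertices of degree $\le m$ in $X$ whose neighborhoods are constrained. I would first try proving the bipartite Chvátal criterion directly by induction on the closure, and then read off the edge bound from it.
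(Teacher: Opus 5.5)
Your argument is correct and is essentially the paper's route. The maximal-degree-sum non-edge $xy$, whose $n-d(y)\ge m$ non-neighbours in $X$ all have degree at most $m$, is exactly Lemma~\ref{le:ch-3.1}. Your count $m^2+(n-m)n=e(B_n^m)$ followed by convexity is what Theorem~\ref{th:ch-1.5} gives for $\mathcal P=e$. The only difference is that you replace the Bondy--Chv\'atal closure (Theorem~\ref{th:ch-2.1}) by a saturated non-Hamiltonian supergraph plus the direct path-crossing argument. Your closing worry is unfounded: the set $T$ already supplies the $m$ low-degree vertices in $X$, so the middle step is complete as written.
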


Spectral extremal versions of Hamiltonian problems have been studied
extensively in recent years. Let $A(G)$ and $D(G)$ denote the adjacency
matrix and the degree matrix of $G$, respectively. The spectral radius
$\rho(G)$ is the largest eigenvalue of $A(G)$, and the signless Laplacian
spectral radius $q(G)$ is the largest eigenvalue of $Q(G)=D(G)+A(G)$.

Spectral extremal problems, in which one optimizes an eigenvalue over a prescribed graph class, go back to Brualdi and Solheid~\cite{BrualdiSolheid1986}.
Several related developments illustrate the scope of this spectral
approach. Fiedler and Nikiforov~\cite{FiedlerNikiforov2010} obtained
tight spectral-radius conditions for the existence of Hamilton paths and
cycles in general graphs. Lu, Liu, and Tian~\cite{LuLiuTian2012} gave
related spectral conditions for Hamiltonian graphs, and Ning and
Ge~\cite{NingGe2015} further studied spectral-radius conditions for Hamiltonian properties. Nikiforov~\cite{Nikiforov2016} studied
Hamiltonian paths and cycles under a prescribed large minimum degree. 
For the stronger property of Hamilton-connectedness, Zhou and
Wang~\cite{ZhouWang2017} established sufficient conditions in terms of
the edge number, the adjacency spectral radius, and the signless
Laplacian spectral radius, and also considered traceability from every
vertex. Zhou, Wang, and Lu~\cite{ZhouWangLu2020a,ZhouWangLu2020b} later
established related sufficient conditions in terms of the adjacency
spectral radius and the signless Laplacian spectral radius, including a
large-minimum-degree signless Laplacian version. The same circle of questions has also been extended
beyond ordinary Hamilton cycles: Yan, He, Feng, and Liu~\cite{YanHeFengLiu2023}
studied the spectral radius and the 2-power of a Hamilton cycle; He, Li,
and Feng~\cite{HeLiFeng2024} considered spectral conditions for rainbow
Hamilton paths; and Zhang, Li, Feng, and Liu~\cite{ZhangLiFengLiu2025}
developed signless Laplacian analogues for rainbow Hamilton paths,
matchings, and linear forests. 

Li and Ning~\cite{LiN2016,LiN2017} obtained spectral analogues of the Moon--Moser theorem for Hamiltonicity in balanced bipartite graphs and for traceability in nearly balanced bipartite graphs, determining the sharp adjacency spectral radius and signless Laplacian spectral radius thresholds under the quadratic part-size condition $n\ge (k+1)^2$. 
One of the aims of this paper is to show that the same thresholds already hold in a linear range. 
We use the following notation, following Liu, Ning, and Wang~\cite{LiuNingWang2026}. Let $C_n$ and
$P_n$ denote the cycle and the path on $n$ vertices, respectively. For
$\lambda\in\{\rho,q\}$ and a graph property $\mathcal H$, let
$\spex_\lambda(n,\mathcal H;\delta\ge k)$ denote the maximum value of
$\lambda(G)$ over all $n$-vertex graphs $G$ with $\delta(G)\ge k$
that do not possess $\mathcal H$. The corresponding family of extremal
graphs is denoted by $\SPEX_\lambda(n,\mathcal H;\delta\ge k)$.

To formulate our results in the same extremal language, we recall the following general problem of Liu, Ning, and Wang~\cite{LiuNingWang2026}.

\begin{problem}[Liu, Ning, and Wang~\cite{LiuNingWang2026}]\label{prob:LN-spectral}
	For integers $n\ge3$ and $k\ge1$ with $n\ge2k+1$, determine
	\[
	\spex_\rho(n,C_n;\delta\ge k),\quad
	\spex_q(n,C_n;\delta\ge k),\quad
	\spex_\rho(n,P_n;\delta\ge k),\quad
	\spex_q(n,P_n;\delta\ge k).
	\]
\end{problem}

For two disjoint graphs $H_1$ and $H_2$, let $H_1\cup H_2$ denote their
disjoint union, and let $H_1\vee H_2$ denote their join. Also, $sK_1$
denotes the disjoint union of $s$ copies of $K_1$.

The following theorem records the spectral solution, due to Liu, Ning,
and Wang~\cite{LiuNingWang2026}, for the Hamilton-cycle part
of Problem~\ref{prob:LN-spectral}.

\begin{thm}[Liu, Ning, and Wang~\cite{LiuNingWang2026}]
\label{thm:LNW-spectral-cycle}
Let $k$ and $n$ be positive integers with $n\ge 2k+1$. Then the following
hold:
\begin{enumerate}
    \item[(i)]
    \[
    \spex_{\rho}(n,C_n;\delta\ge k)
    =
    \max\left\{
    \rho\left(K_s\vee(sK_1\cup K_{n-2s})\right):
    k\le s\le \left\lfloor\frac{n-1}{2}\right\rfloor
    \right\},
    \]
    and
    \[
    \SPEX_{\rho}(n,C_n;\delta\ge k)
    \subseteq
    \left\{
    K_s\vee(sK_1\cup K_{n-2s}):
    k\le s\le \left\lfloor\frac{n-1}{2}\right\rfloor
    \right\}.
    \]

    \item[(ii)]
    \[
    \spex_{q}(n,C_n;\delta\ge k)
    =
    \max\left\{
    q\left(K_s\vee(sK_1\cup K_{n-2s})\right):
    k\le s\le \left\lfloor\frac{n-1}{2}\right\rfloor
    \right\},
    \]
    and
    \[
    \SPEX_{q}(n,C_n;\delta\ge k)
    \subseteq
    \left\{
    K_s\vee(sK_1\cup K_{n-2s}):
    k\le s\le \left\lfloor\frac{n-1}{2}\right\rfloor
    \right\}.
    \]
\end{enumerate}
\end{thm}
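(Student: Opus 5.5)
We prove Theorem~\ref{thm:LNW-spectral-cycle} by a closure-and-shifting argument. By Perron--Frobenius, $\rho$ and $q$ strictly increase when an edge is added to a connected graph, and they do not decrease under Kelmans operations (this is the ``feasible parameter'' viewpoint of the abstract). Let $G$ be an extremal graph in $\SPEX_\lambda(n,C_n;\delta\ge k)$ with $\lambda\in\{\rho,q\}$.

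\emph{Step 1: reduce to a closed, edge-maximal graph.} We may assume $G$ is edge-maximal non-Hamiltonian, since adding an edge preserves $\delta\ge k$ and strictly increases $\lambda$. By Bondy--Chv\'atal, $G$ is then closed: $d(u)+d(v)\le n-1$ for every non-edge $uv$. Otherwise adding $uv$ keeps $G$ non-Hamiltonian and contradicts maximality.

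\emph{Step 2: normalize by Kelmans operations.} Kelmans operations do not increase the circumference, so they preserve non-Hamiltonicity, and $\lambda$ does not decrease under them. Applying them repeatedly to pairs of vertices with incomparable neighbourhoods, we reach a threshold-type graph whose neighbourhoods are nested. We must check that $\delta\ge k$ survives. A Kelmans operation from $v$ to $u$ lowers only $d(v)$, to $|N(v)\cap N(u)|$ plus possibly the edge $uv$. To keep the minimum degree, we choose each operation so that the shifted vertex keeps degree at least $k$; it suffices to shift towards a vertex of maximum degree inside a clique-like core. By maximality, after each step we re-close by adding edges.

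\emph{Step 3: classify the nested graphs.} A non-Hamiltonian graph with nested neighbourhoods satisfies the Chv\'atal condition with a witness $s<n/2$: there are at least $s$ vertices of degree at most $s$. Since $G$ is edge-maximal and shifted, these $s$ vertices have neighbourhood exactly the $s$ top vertices, and every other pair of vertices is adjacent. Hence $G\cong K_s\vee(sK_1\cup K_{n-2s})$. The condition $\delta\ge k$ forces $s\ge k$, and $s\le\lfloor (n-1)/2\rfloor$ holds automatically. The value of $\spex_\lambda$ is therefore the stated maximum. Because $\lambda$ increases strictly under edge addition, an extremal graph cannot lose edges in Steps~1--3, so every extremal graph belongs to the listed family.

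\emph{Main obstacle.} The hard part is the containment of extremal graphs in Step~2. Kelmans operations only give a nondecreasing $\lambda$, not a strictly increasing one, so the argument above shows only that some graph in the family attains the maximum; it does not yet pin down the original extremal $G$. To close the gap, I would show the monotonicity is strict whenever the neighbourhoods are genuinely incomparable. For the Perron vector $x$ with $x_u\ge x_v$, the operation changes the Rayleigh quotient by at least $2(x_u-x_v)\sum_{w}x_w$, summed over the moved neighbours $w$. If $x_u=x_v$, I would use the eigen-equations to derive a contradiction, which forces the extremal $G$ to already be shifted. The analogous computation for $q$ uses $x^{T}Qx=\sum_{ij\in E}(x_i+x_j)^2$.
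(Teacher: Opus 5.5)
\textbf{There is a genuine gap in your Step~2.} This theorem is only quoted in the paper (from Liu, Ning and Wang), so the paper gives no proof of it. Its proof of the bipartite analogue, Theorem~\ref{th:ch-1.5} via Lemmas~\ref{le:ch-3.1} and~\ref{le:ch-2.3}, follows their framework, and that is the natural comparison.

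The problem is that in general you cannot shift to a graph with nested neighbourhoods while keeping $\delta\ge k$. Take $k\ge 2$, $n=2k+3$ and $G=K_1\vee(K_{k+1}\cup K_{k+1})$, with centre $c$ and cliques $A,B$.
\begin{itemize}
\item $G$ is edge-maximal non-Hamiltonian and closed, since every non-edge has degree sum $n-1$; also $\delta(G)=k+1$.
\item Kelmans operations inside a clique, or towards $c$, are trivial.
\item $G[c\to a]$ only gives an isomorphic copy with $a$ as the new centre.
\item $G[a\to b]$ with $a\in A$, $b\in B$ leaves $d(a)=1<k$. Since $G[b\to a]\cong G[a\to b]$, the direction does not matter.
\end{itemize}
So your procedure stalls at a graph with incomparable neighbourhoods, and Step~3 never applies. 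Shifting towards a maximum-degree vertex of a clique-like core does not help, because that vertex is $c$. If instead you let $\delta$ drop, you lose $s\ge k$. After $G[a\to b]$ the pendant vertex $a$ survives every further shift of an incomparable pair and every Bondy--Chv\'atal closure step. The final graph then has minimum degree $1$, and Step~3 no longer gives $s\ge k$.

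\textbf{The missing idea is to fix the witness before any shifting and then shift in a targeted way.} Intermediate graphs then need be neither non-Hamiltonian nor of minimum degree at least $k$.
\begin{itemize}
\item Chv\'atal's theorem, or the closure argument of Lemma~\ref{le:ch-3.1} adapted to general graphs, gives $s\le\lfloor (n-1)/2\rfloor$ and a set $S$ of $s$ vertices of degree at most $s$. Since $\delta(G)\ge k$ still holds at this point, $s\ge k$.
\item Make $V\setminus S$ a clique and fix $W\subseteq V\setminus S$ with $|W|=s$.
\item While some $u\in S$ has a neighbour $v\notin W$ (possibly $v\in S$), pick $w\in W\setminus N(u)$, which exists since $d(u)\le s$. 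Apply $G[v\to w]$, as in Lemma~\ref{le:ch-2.3}.
\item Because $w$ is adjacent to all of $V\setminus S$, only edges into $S$ move. Degrees in $S$ never increase, and the number of edges from $S$ to $V\setminus W$ (edges inside $S$ included) strictly drops.
\item The end result is a spanning subgraph of $K_s\vee(sK_1\cup K_{n-2s})$.
\end{itemize}

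This also removes your ``main obstacle'', because the equality case then needs strictness only under edge addition, not under Kelmans operations. Equality forces three things:
\begin{itemize}
\item $V\setminus S$ is already a clique in $G$.
\item $d(u)=s$ for every $u\in S$, since degrees in $S$ start at most $s$, never increase, and equal $s$ in the target.
\item $S$ is independent. Edges inside $S$ are removed only by operations with $v\in S$, and such an operation would leave $d(v)<s$ at the end.
\end{itemize}
Then, exactly as in the proof of Theorem~\ref{th:ch-1.5}, $|N(S)|>s$ would make the closure complete, so $G\cong K_s\vee(sK_1\cup K_{n-2s})$.
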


The path analogues can also be obtained from the feasible-parameter theorem of Liu, Ning, and Wang~\cite{LiuNingWang2026}, although they are
not explicitly listed in Theorem~\ref{thm:LNW-spectral-cycle}. In contrast to Theorem~\ref{thm:LNW-spectral-cycle}, which concerns ordinary graphs, the present paper treats the corresponding Moon--Moser type problem in balanced and nearly balanced bipartite graphs. We prove that the sharp spectral thresholds for Hamiltonicity in balanced bipartite graphs and for traceability in nearly balanced bipartite graphs remain valid in the linear ranges $n\ge 2k$ and $n\ge 2k+1$, respectively. 

The proof strategy of the present paper is inspired by the extremal framework used in \cite{LiuNingWang2026}. To prove our results in a unified way, we use the notion of feasible graph parameters introduced by Ai, Lei, Ning, and Shi \cite{AiL2025}. We recall the relevant operation first. For two vertices $x,y\in V(G)$, the Kelmans operation $G[x\to y]$ is obtained from $G$ by deleting every edge $xz$ with $z\in N_G(x)\setminus N_G(y)$ and adding the corresponding edge $yz$.
Equivalently, the private neighbors of $x$ outside $N_G(y)$ are transferred from $x$ to $y$.

\begin{definition}[Ai, Lei, Ning, Shi~\cite{AiL2025}]\label{def:feasible}
Let $\mathcal P$ be a graph parameter defined on a class of connected
graphs. We call $\mathcal P$ a feasible parameter if it satisfies the
following two properties:
\begin{enumerate}
    \item[(i)] if $xy\notin E(G)$ and $G+xy$ belongs to the class, then
    $\mathcal P(G+xy)>\mathcal P(G)$;
    \item[(ii)] if $xy\notin E(G)$ and $G[x\to y]$ belongs to the class,
    then $\mathcal P(G[x\to y])\ge \mathcal P(G)$.
\end{enumerate}
\end{definition}

Ai, Lei, Ning, and Shi~\cite{AiL2025} showed that the number of edges, the spectral radius, and the signless Laplacian spectral radius are feasible graph parameters. In this paper, we establish feasible-parameter versions of Moon--Moser type extremal theorems for non-Hamiltonian balanced
bipartite graphs and for non-traceable nearly balanced bipartite graphs. These structural results yield the sharp spectral Moon--Moser type theorems for Hamiltonicity and traceability in the linear range.

We shall also use the following notation. For $\lambda\in\{\rho,q\}$, let $\spex_{\lambda}^{\mathrm{B}}(2n,C_{2n};\delta\ge k)$ denote the maximum value of $\lambda(G)$ over all non-Hamiltonian balanced bipartite graphs $G$ of order $2n$ with $\delta(G)\ge k$.
The corresponding extremal family is denoted by $\SPEX_{\lambda}^{\mathrm{B}}(2n,C_{2n};\delta\ge k)$. For nearly balanced bipartite graphs, let $\spex_{\lambda}^{\mathrm{NB}}(2n-1,P_{2n-1};\delta\ge k)$ denote the maximum value of $\lambda(G)$ over all non-traceable nearly
balanced bipartite graphs $G=G[X,Y]$ with $|X|=n$, $|Y|=n-1$, and
$\delta(G)\ge k$. The corresponding extremal family is denoted by $\SPEX_{\lambda}^{\mathrm{NB}}(2n-1,P_{2n-1};\delta\ge k)$. When an ambient graph class is specified, we write $\ex_{\mathcal P}(n,\mathcal H;\delta\ge k)$ for the maximum value of $\mathcal P(G)$ over all graphs $G$ in that class with $|V(G)|=n$ and $\delta(G)\ge k$ that do not possess $\mathcal H$. The corresponding
family of extremal graphs is denoted by
$\EX_{\mathcal P}(n,\mathcal H;\delta\ge k)$.

\subsection{Hamiltonicity for balanced bipartite graphs}

For $1\le s\le \lfloor n/2\rfloor$, let $B_n^s$ be the balanced bipartite graph with
bipartition $X=X_1\cup X_2$ and $Y=Y_1\cup Y_2$, where 
$$|X_1|=|Y_1|=s, |X_2|=|Y_2|=n-s,$$ and
$$E(B_n^s)=\{xy:x\in X_1,\,y\in Y_{1}\}\cup \{xy:x\in X_{2},\,y\in Y\}.$$
Equivalently, $B_n^s$ is obtained from $K_{n,n}$ by deleting all edges between a fixed $s$-set in one part and a fixed $(n-s)$-set in the other part.

Li and Ning~\cite{LiN2016} proved the following spectral analogue of the Moon--Moser theorem.

\begin{thm}[Li and Ning~\cite{LiN2016}]\label{thm:LiN-hamiltonian-cycle}
	Let $G$ be a balanced bipartite graph of order $2n$ with $\delta(G)\ge k\ge1$. If
	$n\ge (k+1)^2$, then the following statements hold.
	\begin{enumerate}
		\item[(i)] If $\rho(G)\ge \rho(B_n^k)$, then $G$ is Hamiltonian unless
		$G\cong B_n^k$.
		\item[(ii)] If $q(G)\ge q(B_n^k)$, then $G$ is Hamiltonian unless
		$G\cong B_n^k$.
	\end{enumerate}
\end{thm}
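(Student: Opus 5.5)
We prove Theorem~\ref{thm:LiN-hamiltonian-cycle} by following the closure/stability route of Li and Ning, combined with the feasibility of $\rho$ and $q$. Let $G$ be a non-Hamiltonian balanced bipartite graph of order $2n$ with $\delta(G)\ge k$. We may take $G$ edge-maximal non-Hamiltonian, since adding edges keeps $\delta\ge k$ and by Definition~\ref{def:feasible}(i) strictly increases $\rho$ and $q$. We then use the bipartite closure lemma of Bondy--Chv\'atal type: if $x\in X$, $y\in Y$ are nonadjacent with $d(x)+d(y)\ge n+1$, then $G$ is Hamiltonian iff $G+xy$ is. Hence in the maximal $G$ every nonadjacent pair $x\in X$, $y\in Y$ satisfies $d(x)+d(y)\le n$.

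\textbf{Degree-sequence step.} We invoke the bipartite Chv\'atal-type criterion of Moon--Moser and Jackson. Order the degrees in $X$ and in $Y$ increasingly. Non-Hamiltonicity then gives an index $s$ with $k\le s\le n/2$ such that $X$ has at least $s$ vertices of degree $\le s$ and $Y$ has at least $s$ vertices of degree $\le n-s$ (or symmetrically). Edge-maximality together with the closure property forces the structure to be exactly $B_n^s$. Concretely, take $X_1$ to be $s$ vertices of degree $\le s$. The closure inequality shows that all edges from $X_2$ to $Y$ are present and that $N(X_1)$ is a set $Y_1$ of size $s$, because $|N(X_1)|>s$ would make the graph Hamiltonian after closure. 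So $G\subseteq B_n^s$, and since $B_n^s$ is itself non-Hamiltonian (its $s$ vertices of $X_1$ see only $s$ vertices), maximality gives $G\cong B_n^s$ for some $k\le s\le\lfloor n/2\rfloor$.

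\textbf{Comparison step.} It remains to show $\rho(B_n^s)<\rho(B_n^k)$ and $q(B_n^s)<q(B_n^k)$ for $k<s\le n/2$ when $n\ge (k+1)^2$. Both spectral radii are largest roots of small quotient-matrix polynomials from the equitable partition $\{X_1,X_2,Y_1,Y_2\}$. For $\rho$, $\rho^2$ is the largest root of a quadratic. We would then use the crude bounds $\rho(B_n^k)^2\ge e(B_n^k)^2/n^2 \approx (n-k)^2 + \dots$, which is close to $n^2-kn$, while $\rho(B_n^s)^2 \le \max$ row-sum-type bounds of order $n^2-sn+s^2$ in the range $s\ge k+1$. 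The hypothesis $n\ge (k+1)^2$ makes $kn$ dominate the excess. The case $s=n/2$ needs separate direct treatment.

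\textbf{Main obstacle.} The hardest part is the comparison for intermediate $s$, especially near $s\approx n/2$, where the edge count of $B_n^s$ is again large. For $q$ the quotient polynomial is cubic/quartic, so monotonicity in $s$ is not obvious. Our first attempt would be to evaluate the characteristic polynomial of $B_n^s$ at $\lambda=\rho(B_n^k)$ (respectively $q(B_n^k)$), show that it is positive there along with all its derivatives, and thereby place all roots below.
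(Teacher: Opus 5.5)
Your proposal has a genuine gap in the structural step. Passing to an edge-maximal non-Hamiltonian $G$ is harmless. What fails is the next claim, that maximality plus the closure property force $G[X_2,Y]$ to be complete and $|N(X_1)|=s$, hence $G\cong B_n^s$.

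Maximal non-Hamiltonian balanced bipartite graphs need not be any $B_n^s$. Take $n=4$, $k=1$, $X=\{x_0,\dots,x_3\}$, $Y=\{y_0,\dots,y_3\}$. Let the edges be $x_iy_i$ for $i=1,2,3$, together with $x_0$ adjacent to all of $Y$ and $y_0$ adjacent to all of $X$.
\begin{itemize}
\item Deleting $x_0,y_0$ leaves three components, so $G$ is non-Hamiltonian.
\item Every non-edge $x_iy_j$ ($1\le i\ne j\le 3$) has degree sum $4=n$, so $G$ is already closed.
\item $G+x_1y_2$ contains the Hamilton cycle $x_0y_3x_3y_0x_2y_2x_1y_1x_0$, so by symmetry $G$ is edge-maximal.
\end{itemize}
Here $\delta(G)=2\ge k$ and $n=(k+1)^2$. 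Your certificate is $s=2$, say with $X_1=\{x_1,x_2\}$. Yet $x_3y_1\notin E(G)$ and $|N(X_1)|=3>s$. Moreover $G\not\cong B_4^s$, since every $B_n^s$ has exactly $n$ vertices of degree $n$ while $G$ has two. Using three copies of $K_{a,a}$ gives such examples for every $n=3a+1$.

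The missing idea is to compare $G$ with auxiliary graphs that are \emph{allowed to be Hamiltonian}. In the proof of Theorem~\ref{th:ch-1.5} (which with Lemma~\ref{le:ch-3.2} gives the statement already for $n\ge 2k$, Theorem~\ref{th:ch-1.6}) the steps are:
\begin{itemize}
\item take the set $S$ of $s$ low-degree vertices from Lemma~\ref{le:ch-3.1};
\item add all edges between $X\setminus S$ and $Y$ to get $H$;
\item apply Kelmans operations inside $Y$ (Lemma~\ref{le:ch-2.3}) to get $\Gamma\subseteq B_n^s$.
\end{itemize}
Feasibility then gives $\lambda(G)\le\lambda(H)\le\lambda(\Gamma)\le\lambda(B_n^s)$ for $\lambda\in\{\rho,q\}$. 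Only in the equality case does one learn $G=H$ and $d_G(x)=s$ on $S$. Only then is your closure argument valid, forcing $|N_G(S)|=s$.

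Your comparison step also fails as written. The average-degree bound gives $\rho(B_n^k)^2\ge(n-k+k^2/n)^2=n^2-2kn+O(k^2)$, which is not close to $n^2-kn$. Throughout $n\ge(k+1)^2$ it lies below $e(B_n^{k+1})=n^2-(k+1)n+(k+1)^2$, so it cannot beat the natural bound $\rho(B_n^s)^2\le e(B_n^s)$.

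The usable lower bound is $\rho(B_n^k)^2>\rho(K_{n,n-k})^2=n(n-k)$. The inequality $n(n-k)\ge n^2-(k+1)n+(k+1)^2$ is exactly $n\ge(k+1)^2$. Since $s^2+n(n-s)$ decreases on $[0,n/2]$, no separate treatment of $s=n/2$ is needed. The same estimate $\rho(G)^2\le e(G)\le s^2+(n-s)n$ even disposes of every non-Hamiltonian $G$ whose certificate has $s\ge k+1$, with no structure at all. So the real difficulty is $s=k$, which is precisely where the comparison argument above is needed.

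Finally, the quotient polynomials are explicitly solvable. One has $\rho(B_n^s)^2=\tfrac12\bigl(n^2-u_s+\sqrt{(n^2-3u_s)(n^2+u_s)}\bigr)$, and $\det(xI-B_Q(s))=x(x-n)(x^2-2nx+2u_s)$, so $q(B_n^s)=n+\sqrt{n^2-2u_s}$, where $u_s=s(n-s)$. Since $u_s$ increases for $s\le n/2$, both $\rho(B_n^s)$ and $q(B_n^s)$ are strictly decreasing in $s$. This is why the paper needs only $n\ge 2k$.
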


Our first structural result is a feasible-parameter
version of the Moon--Moser theorem. 

\begin{thm}\label{th:ch-1.5}
Let $1\le k\le \lfloor n/2\rfloor$. Let $\mathcal{P}$ be a graph parameter defined on all balanced bipartite graphs of order $2n$. Assume that on connected members of this class $\mathcal{P}$ is feasible, and assume further that $\mathcal{P}(H)<\mathcal{P}(G)$ whenever $H$ is a proper spanning subgraph of a connected graph $G$ in the class. Then
\[
\ex_{\mathcal P}(2n,C_{2n};\delta\ge k)
=
\max\left\{
\mathcal P(B_n^s):
k\le s\le \left\lfloor\frac n2\right\rfloor
\right\}.
\]
Moreover,
\[
\EX_{\mathcal P}(2n,C_{2n};\delta\ge k)
\subseteq
\left\{
B_n^s: k\le s\le \left\lfloor\frac n2\right\rfloor
\right\}.
\]
\end{thm}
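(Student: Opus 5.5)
Let $G$ be an extremal graph: a non-Hamiltonian balanced bipartite graph of order $2n$ with $\delta(G)\ge k$ that maximizes $\mathcal P$. First we check that each $B_n^s$ with $k\le s\le\lfloor n/2\rfloor$ is admissible. It has minimum degree $s\ge k$, because vertices of $X_1$ (resp.\ $Y_1$) have degree $s$ and vertices of $X_2$ have degree $n$. It is non-Hamiltonian, since deleting $X_1\cup Y_2$ would have to be compatible with a Hamilton cycle: $N(X_1)=Y_1$ has size $s$, so a Hamilton cycle would need to enter the $s$ vertices of $X_1$ using only $s$ vertices of $Y_1$, which forces the cycle to be $G[X_1,Y_1]$ itself, a contradiction as $s<n$. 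Hence
\[
\ex_{\mathcal P}(2n,C_{2n};\delta\ge k)\ge \max_s\mathcal P(B_n^s).
\]

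For the reverse inequality we first saturate. Because $\mathcal P$ strictly increases under adding edges (by (i) on connected graphs, and by the extra spanning-subgraph hypothesis in general), the extremal $G$ must be edge-maximal non-Hamiltonian: adding any missing $X$--$Y$ edge creates a Hamilton cycle. In particular $G$ contains a Hamilton path between any nonadjacent $x\in X$ and $y\in Y$. Since $\delta\ge 1$ and $G$ is saturated, $G$ is connected, so feasibility applies. The standard bipartite Bondy--Chvátal closure argument then gives $d(x)+d(y)\le n$ for every nonadjacent pair $x\in X$, $y\in Y$.

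Next we apply Kelmans operations within the class. Take $x,x'\in X$ and replace $G$ by $G[x\to x']$. This stays bipartite and balanced and does not increase $\mathcal P$'s competitor value, since $\mathcal P(G[x\to x'])\ge\mathcal P(G)$. We must check two things:
\begin{itemize}
\item The operation preserves non-Hamiltonicity. This is the standard fact that a Hamilton cycle in $G[x\to x']$ yields one in $G$ (Kelmans' lemma for cycles; I would verify it directly by rerouting through $x$ and $x'$).
\item It preserves $\delta\ge k$, provided we choose $x$ so that $d(x)$ stays at least $k$.
\end{itemize}
Iterating, we may assume the neighborhoods in $X$ are nested (a threshold/"Ferrers" bipartite graph), and likewise for $Y$, since Kelmans operations make neighborhoods comparable. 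Re-saturating after each step keeps maximality, because $\mathcal P$ only goes up. By maximality of $G$ among extremal graphs, equality must hold throughout, so the final nested graph is also extremal. The uniqueness statement then needs tracking: if $G$ itself were not of the form $B_n^s$, then either some operation or addition strictly increases $\mathcal P$, or we must argue $G$ was already nested.

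For a nested saturated bipartite non-Hamiltonian graph, Hall-type reasoning determines the structure. Order $X$ by decreasing degree $d_1\ge\dots\ge d_n$. A nested bipartite graph is Hamiltonian iff $d_{i}\ge i+1$ for all $i<n$ (Chvátal-type condition in the bipartite setting, Moon--Moser). So some $s$ has at most $s$ vertices of $X$ with degree $\le s$, equivalently some $s$-set $S\subseteq X$ with $|N(S)|\le s$. Saturation then forces $N(S)=Y_1$ of size exactly $s$, $S$ complete to $Y_1$, and everything else complete, which is exactly $B_n^s$. Here $s\ge k$ by the degree condition, and $s\le n/2$ after possibly swapping roles by the symmetry $B_n^s\cong B_n^{n-s}$.

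The main obstacle is the Kelmans step: showing that the operation preserves non-Hamiltonicity and $\delta\ge k$ simultaneously, and extracting uniqueness (containment of $\EX$) from the merely weak inequality in (ii). I would handle uniqueness by instead arguing directly on the saturated $G$. I would take a Hamilton path in $G+xy$, apply the bipartite Pósa rotation to get a set $S\subseteq X$ with $|N(S)|\le|S|$ and $|S|<n$, and then use saturation to conclude $G=B_n^{|S|}$ without Kelmans operations at all, reserving Kelmans only if this rotation argument fails to pin the structure.
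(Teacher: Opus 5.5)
Your lower bound and the saturation step are fine: an edge-maximal non-Hamiltonian $G$ is connected and closure-closed, and a Kelmans shift does preserve non-Hamiltonicity. But the upper bound and the containment for $\EX_{\mathcal P}$ both rest on steps that are not established.

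\textbf{First gap: reaching a nested graph.} The shift $G[x\to x']$ lowers $d(x)$ to $|N(x)\cap N(x')|$. This can be below $k$, or even $0$; in the latter case the graph is disconnected and feasibility (ii) no longer applies. You give no reason why a non-nested admissible graph must contain an incomparable pair with $|N(x)\cap N(x')|\ge k$, so the iteration can stall before reaching a Ferrers graph. If instead you let $\delta$ drop, the nested graph you reach may be some $B_n^s$ with $s<k$, which is not admissible and gives no bound.

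\textbf{Second gap: uniqueness.} Since (ii) is only a weak inequality, equality along a chain of shifts says nothing about the structure of the original $G$. Your fallback cannot work either. No rotation argument produces $S\subseteq X$ with $|N(S)|\le|S|$ in an arbitrary saturated non-Hamiltonian graph. Saturating a connected cubic bipartite non-Hamiltonian graph (e.g.\ the Horton graph) gives a graph with $|N(S)|>|S|$ for every proper nonempty $S$ on either side, so it is not any $B_n^s$. In fact, if saturation alone fixed the structure, the theorem would hold for every strictly edge-monotone parameter. That is false: take $e(G)$ plus a large bonus for containing a spanning copy of such a graph.

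\textbf{What the paper does instead.} The intermediate graphs need not be admissible; only the endpoint must be.
\begin{enumerate}
\item Find the low-degree set via the closure. In $\operatorname{cl}_B(G)$, take a non-edge $xy$ of maximum degree sum with $d(x)\le d(y)$, say $x\in X$ by symmetry, and put $s=d(x)$. Then $k\le s\le\lfloor n/2\rfloor$. Moreover, the at least $s$ vertices of $X\setminus N(y)$ all have degree at most $s$, which gives an $s$-set $S\subseteq X$ of such vertices in $G$.
\item Add all edges between $X\setminus S$ and $Y$. This only raises $\mathcal P$, and it does not matter if a Hamilton cycle appears.
\item Apply Kelmans operations inside $Y$ to push $N(S)$ into a fixed $s$-set $W\subseteq Y$. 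These shifts preserve every degree in $X$ and keep the graph connected, because $X\setminus S$ stays complete to $Y$.
\item The result is a spanning subgraph of the admissible graph $B_n^s$, so $\mathcal P(G)\le\mathcal P(B_n^s)$.
\end{enumerate}
For uniqueness, equality throughout forces $X\setminus S$ to be complete to $Y$ already in $G$, and $d_G(x)=s$ for every $x\in S$. If $|N_G(S)|\ge s+1$, the closure first adds all missing edges between $S$ and $N_G(S)$, and then all remaining ones, giving $K_{n,n}$. That contradicts non-Hamiltonicity, so $|N_G(S)|=s$ and $G\cong B_n^s$.
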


We reduce the quadratic condition $n\ge (k+1)^2$ in Theorem~\ref{thm:LiN-hamiltonian-cycle} to a linear range $n\ge 2k$, and obtain the following theorem. 
\begin{thm}\label{th:ch-1.6}
Let $k\ge 1$ and $n\ge 2k$. Then
\[
\SPEX_{\rho}^{\mathrm B}(2n,C_{2n};\delta\ge k)
=
\{B_{n}^{k}\}
\quad\text{and}\quad
\spex_{\rho}^{\mathrm B}(2n,C_{2n};\delta\ge k)=\rho(B_n^k),
\]
and
\[
\SPEX_{q}^{\mathrm B}(2n,C_{2n};\delta\ge k)
=
\{B_{n}^{k}\}
\quad\text{and}\quad
\spex_{q}^{\mathrm B}(2n,C_{2n};\delta\ge k)=q(B_n^k).
\]
\end{thm}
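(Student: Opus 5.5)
Throughout we assume Theorem~\ref{th:ch-1.5}; the task is to show that $B_n^k$ beats every other candidate $B_n^s$. Ai, Lei, Ning and Shi~\cite{AiL2025} showed that $\rho$ and $q$ are feasible parameters. Moreover, if $H$ is a proper spanning subgraph of a connected graph $G$, then $\rho(H)<\rho(G)$ and $q(H)<q(G)$ by Perron--Frobenius, since $A(G)$ and $Q(G)$ are irreducible nonnegative matrices that dominate $A(H)$ and $Q(H)$ entrywise with strict inequality somewhere. So Theorem~\ref{th:ch-1.5} applies with $\mathcal P\in\{\rho,q\}$. It remains to show that for $n\ge 2k$ and every $k<s\le\lfloor n/2\rfloor$,
\[
\rho(B_n^s)<\rho(B_n^k)\qquad\text{and}\qquad q(B_n^s)<q(B_n^k).
\]
Then the maximum is attained only at $s=k$. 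Since $B_n^k$ itself is non-Hamiltonian with $\delta(B_n^k)=k$, both equalities in Theorem~\ref{th:ch-1.6} follow. Note that $B_n^k$ is non-Hamiltonian because $X_1$ has only $s=k$ neighbors, all in $Y_1$, and $N(Y_1)\supseteq X$ is forced; more directly, deleting $X_2$ leaves $X_1\cup Y_2$ isolated from each other and gives too many components.

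For the spectral computation I will use the equitable quotient partition $\{X_1,X_2,Y_1,Y_2\}$. The quotient matrix of $A(B_n^s)$ has rows
\[
X_1:(0,0,s,0),\quad X_2:(0,0,s,n-s),\quad Y_1:(s,n-s,0,0),\quad Y_2:(0,n-s,0,0).
\]
The spectral radius is the largest root of the characteristic polynomial. By bipartiteness, $\rho^2$ is the largest eigenvalue of the $2\times2$ product matrix
\[
M_s=\begin{pmatrix} s & 0\\ s & n-s\end{pmatrix}\begin{pmatrix} s & n-s\\ 0 & n-s\end{pmatrix}
=\begin{pmatrix} s^2 & s(n-s)\\ s^2 & n(n-s)\end{pmatrix}.
\]
Writing $t=s(n-s)$, this gives $\rho^2=\lambda_{\max}$ of a matrix with trace $s^2+n(n-s)=n^2-t$ and determinant $s^2(n-s)^2=t^2$. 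Hence
\[
\rho(B_n^s)^2=\frac{n^2-t+\sqrt{(n^2-t)^2-4t^2}}{2},
\]
which is strictly decreasing in $t$ on the relevant range. Since $t=s(n-s)$ is strictly increasing for $s\le n/2$, we get that $\rho(B_n^s)$ is strictly decreasing in $s$, so $s=k$ is the unique maximizer. Note that the adjacency case needs only $k\le n/2$.

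For $q$, I will use the quotient matrix of $Q$ with respect to the same partition, with diagonal degrees $d(X_1)=s$, $d(X_2)=n$, $d(Y_1)=n$, $d(Y_2)=n-s$, and get a quartic in $q$ depending on $s$. The plan is to compare $q(B_n^k)$ with $q(B_n^s)$ for $s>k$. A clean route is a lower bound $q(B_n^k)\ge$ something close to $2n-\tfrac{k(n-k)}{n}\cdot c$, and an upper bound for $q(B_n^s)$ with $s\ge k+1$, obtained by evaluating the characteristic polynomial $f_s(x)$ at $x=q(B_n^k)$ and showing $f_s(x)>0$ with $f_s$ increasing beyond. I expect the \emph{main obstacle} to be this signless Laplacian comparison. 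The condition $n\ge2k$ is presumably exactly what makes the sign computation work; for larger $s$ near $n/2$ the graph is nearly balanced in a symmetric way and the comparison is tightest.

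I would first try to show that $s\mapsto q(B_n^s)$ is decreasing, by treating $s$ as a real variable and differentiating the quartic implicitly. If that fails, I would fall back on the bound $q\le\max_{uv}(d_u+d_v)$ refined via the quotient. Concretely, use $q(B_n^s)<2n-s+\epsilon$ versus $q(B_n^k)>2n-k-\epsilon'$, with explicit rational estimates checked against $n\ge2k$.
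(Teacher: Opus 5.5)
Your reduction is the paper's route. Feasibility of $\rho$ and $q$ and strict Perron--Frobenius monotonicity let you apply Theorem~\ref{th:ch-1.5}, and everything then rests on $\lambda(B_n^s)$ being strictly decreasing in $s$ (Lemma~\ref{le:ch-3.2}). Your adjacency half is correct and matches the paper. Multiplying the two off-diagonal quotient blocks is a tidy way to reach the paper's quadratic $z^2-(n^2-u_s)z+u_s^2$ with $u_s=s(n-s)$. The decrease in $u$ that you assert is immediate, since $\frac{d}{du}\bigl[(n^2-u)^2-4u^2\bigr]=-2n^2-6u<0$. Your first justification that $B_n^k$ is non-Hamiltonian is garbled. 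The second one works: deleting $X_2$ leaves the $n-k$ vertices of $Y_2$ isolated plus the component $X_1\cup Y_1$. In any case this is already covered by Theorem~\ref{th:ch-1.5}.

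The genuine gap is the signless Laplacian half, which you leave as a plan, and the fallback you outline cannot work. In fact $q(B_n^s)=n+\sqrt{(n-s)^2+s^2}>2n-s$ for every $s$. When $s\approx n/2$ the excess is about $(1/\sqrt2-1/2)n\approx 0.2n$, while the margin you must beat is tiny: for $n=10$, $q(B_{10}^4)\approx 17.21$ and $q(B_{10}^5)\approx 17.07$. So no estimate of the form $q(B_n^s)<2n-s+\epsilon$ versus $q(B_n^k)>2n-k-\epsilon'$ closes the argument when $s=k+1$ is near $n/2$.

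The missing observation is that the quartic factors completely. The quotient $B_Q(s)$ on $X_1,X_2,Y_1,Y_2$ has eigenvalue $0$ with eigenvector $(1,1,-1,-1)^T$, by bipartiteness. It has eigenvalue $n$ with eigenvector $(s(n-s),-s^2,(n-s)^2,-s(n-s))^T$. Since $\operatorname{tr}B_Q(s)=3n$ and its principal $2\times 2$ minors sum to $2n^2+2s(n-s)$, the other two eigenvalues are the roots of $x^2-2nx+2u_s$. Thus $\det(xI-B_Q(s))=x(x-n)(x^2-2nx+2u_s)$, as in Lemma~\ref{le:ch-3.2}, and $q(B_n^s)=n+\sqrt{n^2-2u_s}$. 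This is strictly decreasing on $1\le s\le\lfloor n/2\rfloor$ because $u_s$ is strictly increasing there.

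Contrary to your guess, $n\ge 2k$ plays no role in any sign computation. It only guarantees $k\le\lfloor n/2\rfloor$, so that $B_n^k$ lies in the candidate family. No implicit differentiation or $\epsilon$-estimates are needed.
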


\subsection{Traceability for nearly balanced bipartite graphs}

For integers $n\ge 2$ and $1\le s\le n-1$, let $S_n^s$ be the
nearly balanced bipartite graph with bipartition
$X=X_1\cup X_2$ and $Y=Y_1\cup Y_2$, where
\[
|X_1|=s,\quad |X_2|=n-s,\quad |Y_1|=n-s-1,\quad |Y_2|=s,
\]
and
\[
E(S_n^s)=\{xy:x\in X_1,\ y\in Y\}\cup
\{xy:x\in X_2,\ y\in Y_1\}.
\]
Here $Y_1$ is allowed to be empty when $s=n-1$. 

For integers $n\ge 2$ and $0\le t\le \lfloor (n-2)/2\rfloor$, let
$T_n^t$ be the nearly balanced bipartite graph with bipartition
$X=X_1\cup X_2$ and $Y=Y_1\cup Y_2$, where
$$
|X_1|=n-t-1,\quad |X_2|=t+1,\quad |Y_1|=t,\quad |Y_2|=n-t-1,
$$
and
$$
E(T_n^t)=\{xy:x\in X_1,\ y\in Y\}\cup
\{xy:x\in X_2,\ y\in Y_1\}.
$$
Here $Y_1$ is allowed to be empty when $t=0$. With this convention,
$$
T_n^t\cong S_n^{\,n-t-1}.
$$

Li and Ning~\cite{LiN2017} gave the
following spectral result for nearly balanced bipartite graphs.

\begin{thm}[Li and Ning \cite{LiN2017}]
    Let $G$ be a nearly balanced bipartite graph on $2n-1$ vertices, with minimum degree $\delta(G)\geq k$, where $k\geq 1$ and $n\geq (k+1)^{2}$.

    \begin{itemize}
        \item [(i)] If $\rho(G)\geq \rho(S^{k}_{n})$, then $G$ is traceable unless $G\cong S^{k}_{n}$.
        \item[(ii)] If $q(G)\geq q(S^{k}_{n})$, then $G$ is traceable unless $G\cong S^{k}_{n}$. 
    \end{itemize}
\end{thm}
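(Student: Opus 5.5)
The plan is to derive this theorem from a feasible-parameter structural result for non-traceable nearly balanced bipartite graphs, in the spirit of Theorem~\ref{th:ch-1.5}. After that, I would compare the spectral radii of the finitely many candidate extremal graphs.

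\emph{Step 1: passing to a maximal graph.} Let $G=G[X,Y]$ be non-traceable with $|X|=n$, $|Y|=n-1$ and $\delta(G)\ge k$, and let $\lambda\in\{\rho,q\}$. Add edges between $X$ and $Y$ while the graph stays non-traceable; this keeps $\delta\ge k$. Since $\lambda$ strictly increases under adding an edge to a connected graph, I may assume $G$ is edge-maximal non-traceable.

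To understand such a $G$, add a new vertex $y_0$ to $Y$ joined to all of $X$. The resulting graph $G'$ is balanced, and $G$ is traceable if and only if $G'$ is Hamiltonian. The bipartite Bondy--Chv\'atal closure (the Moon--Moser argument) then shows: if $x\in X$ and $y\in Y$ are nonadjacent with $d_G(x)+d_G(y)\ge n$, then adding $xy$ keeps $G$ non-traceable. So in the maximal $G$ every nonadjacent pair satisfies $d(x)+d(y)\le n-1$.

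\emph{Step 2: reducing to chain graphs.} I would apply Kelmans operations $G[x\to y]$ inside each part to make the neighbourhoods nested, so that $G$ becomes a difference (chain) graph. Kelmans operations do not create longer paths or cycles, and $\lambda$ does not decrease under them, since $\rho$ and $q$ are feasible parameters~\cite{AiL2025}. A maximal non-traceable chain graph has a Hall-type obstruction to a Hamilton path in $G'$. Concretely, there is a set $S\subseteq X$ whose neighbourhood satisfies $|N(S)|\le |S|-1$, or a set $T\subseteq Y$ whose neighbourhood satisfies $|N(T)|\le |T|$. Saturating all other edges turns these two obstructions into exactly the families $S_n^s$ (for $k\le s\le n-1$) and $T_n^t$. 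The constraint $\delta\ge k$ restricts $s$ and $t$; for $T_n^t$ the vertices of $X_2$ have degree $t$, forcing $t\ge k$. So $G$ is dominated by one of these graphs.

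\emph{Step 3: spectral comparison (the main obstacle).} I expect this step to be the hardest, and it is where $n\ge (k+1)^2$ enters. For a bipartite graph, $\rho\le\sqrt{e}$ and $q\le \max_{xy\in E}(d(x)+d(y))$. Let $s_0$ denote the smallest admissible $s$, which is $k$ whenever $k\le n-1$.

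\begin{itemize}
\item \textbf{Lower bound for the candidate.} Because $K_{n-k,n-k-1}\cup$ the join edges sit inside $S_n^k$, we get
\[
\rho(S_n^k)>\sqrt{(n-k)(n-k-1)+k(n-1)}\cdot(1-o(1)).
\]
I would make this precise with the quotient matrix of the equitable partition $\{X_1,X_2,Y_1,Y_2\}$.
\item \textbf{Graphs $S_n^s$ with $s\ge k+1$.} Here
\[
e(S_n^s)=s(n-1)+(n-s)(n-s-1),
\]
which is a convex function of $s$. Its values for $k+1\le s\le n-k-1$ are at most the endpoint values. The interior ones fall below $\rho(S_n^k)^2$ when $n\ge (k+1)^2$.
\item \textbf{Graphs $T_n^t$ with $t\ge k$.} These are close to $K_{n-t-1,n-1}$. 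Their quotient matrix gives a characteristic polynomial whose largest root I would compare directly with that of $S_n^k$ (note $T_n^{t}\cong S_n^{n-t-1}$). The large-$s$ endpoint is the delicate one: the edge counts are comparable there, so I would evaluate the quotient polynomial of $S_n^k$ at $\rho(T_n^t)$ and show it is negative when $n\ge(k+1)^2$.
\item \textbf{The signless Laplacian.} Here I would use the analogous $4\times4$ quotient matrices together with $q\le\max(d(x)+d(y))$.
\end{itemize}

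\emph{Step 4: uniqueness.} Strict monotonicity in Step 1, together with strict inequalities in Step 3, gives $\lambda(G)<\lambda(S_n^k)$ unless $G\cong S_n^k$.
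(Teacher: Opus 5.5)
\textbf{Main gap (Step~2).} The step fails because Kelmans operations do not preserve the minimum degree. So the sentence ``the vertices of $X_2$ have degree $t$, forcing $t\ge k$'' is about degrees in the chain graph, not in $G$, and it proves nothing.

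The move $G[x\to y]$ leaves $x$ with only $N(x)\cap N(y)$. Nesting inside $X$ preserves only the $Y$-degrees: the result is the unique chain graph with the $Y$-degree sequence of $G$. Nesting inside $Y$ symmetrically preserves only the $X$-degrees.

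Here is a concrete failure. Let $G$ be a vertex $u\in X$ joined to all of $Y$, together with three disjoint copies of $K_{m,m}$, so $n=3m+1$ (for instance $k=2$, $m=3$ satisfies $n\ge(k+1)^2$).
\begin{itemize}
\item Deleting $u$ leaves three components, so $G$ is non-traceable, and $\delta(G)=m\ge k$.
\item $G$ is already edge-maximal non-traceable, so Step~1 leaves it unchanged.
\item All $Y$-degrees equal $m+1$, so nesting inside $X$ yields $K_{m+1,3m}\cup 2mK_1$. Nesting inside $Y$ instead leaves $2m$ vertices of $Y$ adjacent only to $u$.
\item A single isolated vertex of $X$ is then a Hall obstruction with $t=0$. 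Its saturation $T_n^0=K_{n-1,n-1}\cup K_1$ has $\rho(T_n^0)=n-1>\rho(S_n^k)$, because $\rho(S_n^k)^2<e(S_n^k)=(n-1)^2-(k-1)(n-k-1)$. On the $Y$-side, a vertex adjacent only to $u$ gives $s=1$.
\end{itemize}
In this example a larger obstruction happens to exist, but nothing in your argument guarantees one with parameter at least $k$.

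That parameter must be read off from $G$ before any Kelmans move. The paper does this in Lemma~\ref{le:ch-4.1}. Taking the non-edge of maximum degree sum in the closure of $G'$, as in Lemma~\ref{le:ch-3.1}, gives either $s$ vertices of $Y$ of $G$-degree at most $s$, or $t+1$ vertices of $X$ of $G$-degree at most $t$. In both cases $s,t\ge k$ automatically. The proof of Theorem~\ref{th:ch-1.8} then saturates the complementary block and uses only the moves of Lemma~\ref{le:ch-2.3}, in the opposite part. These moves fix the degrees of the low-degree set and keep the complete block, hence connectivity.

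Your global nesting inside $X$ can also be rescued by the other half of the same closure argument. In the $X$-case it also yields $n-t-1$ vertices of $Y$ of degree at most $n-t-1$. After nesting, these miss the bottom $t+1$ vertices of $X$, which are therefore confined to the remaining $t$ vertices of $Y$. This gives an obstruction with parameter $t\ge k$.

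\textbf{Smaller points.}
\begin{itemize}
\item ``Kelmans operations do not create longer cycles'' is false in general: the path $x\,a\,c\,b\,y$ becomes a $4$-cycle plus an isolated vertex under $[x\to y]$. What is true, but needs proof, is that no Hamilton path is created. The Hall characterization of non-traceable chain graphs also needs proof.
\item In Step~4 the Kelmans steps are only weakly monotone, so $\lambda(G)=\lambda(S_n^k)$ does not yet give $G\cong S_n^k$. The paper obtains only $G\in\mathcal S_n^k$ from Theorem~\ref{th:ch-1.8}, and finishes with the Perron-vector switching of Lemma~\ref{le:ch-4.4}.
\item In Step~3 the admissible range is $k\le s\le n-k-1$, since $\delta(S_n^s)=\min\{s,n-s-1\}$. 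Also $T_n^t\cong S_n^{n-t-1}$, which removes the separate $T$-case.
\item Your edge-count idea does work for $\rho$ once the vague lower bound is replaced by $\rho(S_n^k)^2>n(n-k-1)=e(S_n^k)-k^2$. For $k<s\le n-k-1$ one gets $\rho(S_n^s)^2<e(S_n^s)\le e(S_n^k)-(n-2k-1)\le e(S_n^k)-k^2<\rho(S_n^k)^2$. The second inequality is convexity of $e(S_n^s)=s^2-ns+n^2-n$, and the third holds exactly when $n\ge(k+1)^2$.
\item For $q$, the bound $\max_{xy\in E}(d(x)+d(y))$ equals $2n-1$ for every $S_n^s$, so you genuinely need the cubic comparison.
\end{itemize}

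The paper's Lemmas~\ref{le:ch-4.2} and~\ref{le:ch-4.3} compare the quotient polynomials directly at $\rho(S_n^k)^2$ and $q(S_n^k)$, and need only $n\ge 2k+1$. The stated theorem is therefore the special case $(k+1)^2\ge 2k+1$ of Theorem~\ref{th:ch-1.9}.
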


For $1\le s\le n-1$, let $\mathcal S_{n}^{s}$ be the family of all connected non-traceable nearly balanced bipartite graphs $G=G[X,Y]$ with $|X|=n$ and $|Y|=n-1$ for which there exists a subset $Y_2\subseteq Y$ with $|Y_2|=s$ such that, putting $Y_1=Y\setminus Y_2$, we have $G[X,Y_1]=K_{n,n-s-1}$ and $d_G(y)=s$ for every $y\in Y_2$.

For $0\le t\le \left\lfloor (n-2)/2\right\rfloor$, let $\mathcal T_{n}^{t}$ be the family of all connected non-traceable
nearly balanced bipartite graphs $G=G[X,Y]$ with $|X|=n$ and
$|Y|=n-1$ for which there exists a subset $X_2\subseteq X$ with
$|X_2|=t+1$ such that, putting $X_1=X\setminus X_2$, we have $G[X_1,Y]=K_{n-t-1,n-1}$ and $d_G(x)=t$ for every $x\in X_2$. 

\begin{thm}\label{th:ch-1.8}
Let $n\ge 2k+1$ and $k\ge1$. Let $\mathcal{P}$ be a graph parameter defined on all nearly balanced bipartite graphs $G=G[X,Y]$ with $|X|=n$ and $|Y|=n-1$. Assume that on connected members of this class $\mathcal{P}$ is feasible, and assume further that $\mathcal{P}(H)<\mathcal{P}(G)$ whenever $H$ is a proper spanning subgraph of a connected graph $G$ in the class. Then
\[
\ex_{\mathcal P}(2n-1,P_{2n-1};\delta\ge k)
=
\max\left(
\left\{
\mathcal P(S_{n}^{s}):
k\le s\le \left\lfloor\frac n2\right\rfloor
\right\}
\cup
\left\{
\mathcal P(T_{n}^{t}):
k\le t\le \left\lfloor\frac{n-2}{2}\right\rfloor
\right\}
\right).
\]
Moreover,
$$
\EX_{\mathcal P}(2n-1,P_{2n-1};\delta\ge k)
\subseteq
\left(
\bigcup_{k\le s\le \left\lfloor n/2\right\rfloor}
\mathcal S_n^s
\right)
\cup
\left(
\bigcup_{k\le t\le \left\lfloor (n-2)/2\right\rfloor}
\mathcal T_n^t
\right).
$$
Here the second set and the second union are understood to be empty if $k>\left\lfloor (n-2)/2\right\rfloor$.
\end{thm}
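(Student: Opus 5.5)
The plan is to reduce traceability of a nearly balanced bipartite graph to Hamiltonicity of a balanced one, and then run the same closure/shifting argument that underlies Theorem~\ref{th:ch-1.5}. Given $G=G[X,Y]$ with $|X|=n$, $|Y|=n-1$, let $G^+$ be obtained by adding a new vertex $y_0$ to $Y$ and joining it to all of $X$. A Hamilton path of $G$ must start and end in $X$, since $|X|=|Y|+1$. Hence $G$ is traceable if and only if $G^+$, a balanced bipartite graph of order $2n$, is Hamiltonian.

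\textbf{Step 1: reduce to saturated graphs.} Let $G\in \EX_{\mathcal P}(2n-1,P_{2n-1};\delta\ge k)$. Add edges between $X$ and $Y$ as long as the graph stays non-traceable, ending with an edge-maximal non-traceable supergraph $G^*$. Then $\delta(G^*)\ge k$, and $G^*$ is connected; otherwise we could add an edge between two components without creating a Hamilton path, using $n\ge 2k+1$ to rule out the degenerate cases. By the strict-monotonicity hypothesis, if $G\neq G^*$ then $\mathcal P(G)<\mathcal P(G^*)$, contradicting extremality. So $G$ is saturated. In $G^+$ this means: for every nonadjacent pair $x\in X$, $y\in Y$, adding $xy$ creates a Hamilton cycle through $y_0$. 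By the bipartite Bondy--Chvátal closure, $d_{G^+}(x)+d_{G^+}(y)\le n$, that is, $d_G(x)+d_G(y)\le n-1$.

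\textbf{Step 2: find the Moon--Moser obstruction.} Apply the Moon--Moser/Chvátal degree-sequence criterion to the non-Hamiltonian graph $G^+$. Combined with saturation, it should yield an integer $r\le n/2$ and a set $A$ of $r$ vertices on one side of $G^+$, each of degree at most $r$, such that $N(A)$ lies in an $r$-set on the other side. Call this a ``Hall-type'' obstruction. It is exactly the structure of $B_n^r$ in Theorem~\ref{th:ch-1.5}.

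Two cases arise, according to which side of $G^+$ contains $A$. Note that $y_0$ has degree $n>r$, so $y_0\notin A$.
\begin{enumerate}
\item[(a)] If $A\subseteq Y$, set $Y_2=A$ and $s=r$. The condition $\delta(G)\ge k$ gives $s\ge k$, and $s\le \lfloor n/2\rfloor$.
\item[(b)] If $A\subseteq X$, then $y_0\in N(A)$, so in $G$ the $r$ vertices of $A$ have all their neighbours in a $(r-1)$-subset of $Y$. Writing $t=r-1$ gives an $X_2$ of size $t+1$ with degrees at most $t$. Here $\delta(G)\ge k$ forces $t\ge k$, and $r\le n/2$ forces $t\le\lfloor (n-2)/2\rfloor$.
\end{enumerate}
By saturation, every edge not meeting the obstruction is present. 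So in case (a), $G[X,Y_1]$ is complete; in case (b), $G[X_1,Y]$ is complete. Then $G$ lies in $\mathcal S_n^s$ or $\mathcal T_n^t$ respectively, which proves the inclusion for $\EX_{\mathcal P}$.

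\textbf{Step 3: identify the extremal value.} Every member of $\mathcal S_n^s$ is obtained from $S_n^s$, and every member of $\mathcal T_n^t$ from $T_n^t$, by rearranging the edges between $Y_2$ and $X$ (respectively between $X_2$ and $Y$). Kelmans operations $G[x\to x']$ between vertices of $X$ (respectively $Y$) shift these edges onto a common $s$-set (respectively $t$-set). They preserve the degree constraints and non-traceability, since the obstruction persists. By feasibility (ii), $\mathcal P$ does not decrease, so $\mathcal P(G)\le \mathcal P(S_n^s)$ or $\mathcal P(G)\le\mathcal P(T_n^t)$. Conversely, each $S_n^s$ and $T_n^t$ in the stated ranges is non-traceable with $\delta\ge k$, so the maximum is attained.

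\textbf{Main obstacle.} I expect Step~2 to be the hardest part: extracting a clean Hall-type obstruction from saturation, together with the degree-sum bound $d(x)+d(y)\le n-1$, in the full linear range $n\ge 2k+1$. The special vertex $y_0$ breaks the symmetry and must be tracked to separate the $S$ and $T$ cases, and this is where the shift $t=r-1$ comes from. My first attempt would be the classical proof: take a nonadjacent pair of maximum degree sum, look at the Hamilton path of $G^+$ that it closes, and count crossing neighbours, as in the proof of Theorem~\ref{th:ch-1.5}.
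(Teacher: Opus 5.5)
\textbf{The gap is in Step 2.} Saturation together with the bipartite Chv\'atal/Moon--Moser degree criterion gives you only degree information. That is essentially Lemma~\ref{le:ch-4.1}: either $s$ vertices of $Y$ of degree at most $s$, or $t+1$ vertices of $X$ of degree at most $t$. It does not give the containment of $N(A)$ in an $r$-set, and that containment is false for saturated graphs in general.

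Here is a counterexample with $n=4$, $k=1$. Let $X=\{x^*,x_1,x_2,x_3\}$, $Y=\{y_1,y_2,y_3\}$ and $E=\{x^*y_i,\ x_iy_i: i=1,2,3\}$.
\begin{itemize}
\item This spider is non-traceable, since deleting $x^*$ leaves three components.
\item It is edge-maximal non-traceable. For $i\ne j$ and $\ell\notin\{i,j\}$, adding the missing edge $x_iy_j$ creates the Hamilton path $x_\ell y_\ell x^* y_i x_i y_j x_j$.
\item It has no Hall-type obstruction in $G^+$. Single vertices other than $x^*,y_0$ have degree $2>1$, and $x^*,y_0$ have degree $4$. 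For $r=2$, $N_{G^+}(\{x_i,x_j\})=\{y_i,y_j,y_0\}$ and $N_{G^+}(\{y_i,y_j\})=\{x^*,x_i,x_j\}$, both of size $3>2$.
\end{itemize}
More generally, take three disjoint copies of $K_{m,m}$ and join $x^*$ to all of $Y$. This gives saturated non-traceable graphs with $\delta=m=k$ and $n=3m+1\ge 2k+1$. A vertex set meeting $j$ copies has at most $jm$ vertices but at least $jm+1$ neighbours, so again there is no obstruction.

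So the sentence ``by saturation, every edge not meeting the obstruction is present'' has nothing to stand on. The ``classical'' max-degree-sum argument you mention also yields only degree conditions. Your Step 3 never rules such graphs out as extremal, because it only shifts edges inside graphs already assumed to lie in $\mathcal S_n^s$ or $\mathcal T_n^t$.

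\textbf{The missing idea.} Neither the extremal graph nor the intermediate graphs need to carry an obstruction, or even be non-traceable. This is because $\mathcal P$ is defined and feasible on \emph{all} connected members of the class. The paper proceeds as follows in the $S$-case; the $T$-case is analogous.
\begin{itemize}
\item From Lemma~\ref{le:ch-4.1} it takes only the degree condition: a set $Y_2$ of $s$ vertices of degree at most $s$.
\item It fixes an \emph{arbitrary} $s$-set $X_1\subseteq X$ and adds all missing edges between $X$ and $Y_1=Y\setminus Y_2$. The resulting graph $H$ may well be traceable.
\item It uses Lemma~\ref{le:ch-2.3} to Kelmans-shift the neighbourhoods of $Y_2$ into $X_1$, keeping every intermediate graph connected. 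The endpoint $\Gamma$ is a spanning subgraph of $S_n^s$.
\item This gives $\mathcal P(G)\le\mathcal P(H)\le\mathcal P(\Gamma)\le\mathcal P(S_n^s)$.
\end{itemize}
In the equality case this forces $G=H$ and $d_G(y)=s$ on $Y_2$. That is precisely why $\mathcal S_n^s$ and $\mathcal T_n^t$ are defined through degree conditions rather than neighbourhood structure. Your Step 1 (extremal graphs are edge-maximal non-traceable) is true, but this comparison argument does not need it.
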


As a consequence of the feasible-parameter theorem and the spectral comparison of the extremal graphs, we obtain the following sharp result.

\begin{thm}\label{th:ch-1.9}
Let $k\ge 1$ and $n\ge 2k+1$. Then
\[
\SPEX_{\rho}^{\mathrm {NB}}(2n-1,P_{2n-1};\delta\ge k)
=
\{S_{n}^{k}\}
\quad\text{and}\quad
\spex_{\rho}^{\mathrm {NB}}(2n-1,P_{2n-1};\delta\ge k)=\rho(S_{n}^{k}),
\]
and
\[
\SPEX_{q}^{\mathrm {NB}}(2n-1,P_{2n-1};\delta\ge k)
=
\{S_{n}^{k}\}
\quad\text{and}\quad
\spex_{q}^{\mathrm {NB}}(2n-1,P_{2n-1};\delta\ge k)=q(S_{n}^{k}).
\]
\end{thm}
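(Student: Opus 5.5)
Our plan is to derive Theorem~\ref{th:ch-1.9} from Theorem~\ref{th:ch-1.8}, applied with $\mathcal P=\rho$ and with $\mathcal P=q$. By Ai, Lei, Ning, and Shi~\cite{AiL2025}, both parameters are feasible on connected graphs. Strict monotonicity under taking proper spanning subgraphs of a connected graph follows from Perron--Frobenius: $A(G)$ and $Q(G)$ are irreducible nonnegative matrices, and deleting an edge strictly decreases the entrywise-dominating matrix. Theorem~\ref{th:ch-1.8} therefore gives two conclusions. First, $\spex_\lambda^{\mathrm{NB}}$ is the maximum of $\lambda(S_n^s)$ over $k\le s\le\lfloor n/2\rfloor$ and of $\lambda(T_n^t)$ over $k\le t\le\lfloor (n-2)/2\rfloor$. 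Second, every extremal graph lies in some $\mathcal S_n^s$ or $\mathcal T_n^t$ with $s,t$ in those ranges. The work then splits into two parts: comparing the candidate graphs spectrally, and identifying the extremal graphs inside the families.

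For the identification step, take $G\in\mathcal S_n^s$. Each $y\in Y_2$ has degree exactly $s$, and $G[X,Y_1]$ is complete, so $G$ is a spanning subgraph of a graph isomorphic to $S_n^s$. To see this, note that $G$ is non-traceable. Combined with the structure, this should force all $y\in Y_2$ to share a common neighbourhood of size $s$: if the neighbourhoods of $Y_2$ span more than $s$ vertices of $X$, a Hall-type argument builds a Hamilton path. If that argument is delicate, there is a shortcut. An extremal graph has maximal $\lambda$, and $S_n^s$ itself is non-traceable (removing $X_1$ leaves the $n-s>s+1$ components needed), so by strict monotonicity $\lambda(G)\le\lambda(S_n^s)$, with equality only if $G\cong S_n^s$. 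Hence in all cases extremal graphs are among the $S_n^s$ and $T_n^t$, and the same reasoning applies to $\mathcal T_n^t$.

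The main obstacle is the spectral comparison. We must show $\lambda(S_n^k)>\lambda(S_n^s)$ for $k<s\le\lfloor n/2\rfloor$ and $\lambda(S_n^k)>\lambda(T_n^t)$ for $k\le t\le\lfloor(n-2)/2\rfloor$, for $\lambda\in\{\rho,q\}$ and all $n\ge 2k+1$. We would use equitable quotient matrices. For $S_n^s$ the partition $X_1,X_2,Y_1,Y_2$ is equitable, so $\rho$ is the largest root of an explicit quartic; by bipartiteness $\rho^2$ is the largest root of a quadratic. Concretely, $\rho^2$ is the top eigenvalue of $BB^T$ on the two-part quotient, and similarly $q$ comes from a $4\times4$ quotient. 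One then compares these closed-form largest roots. The first tool is the edge counts, $e(S_n^s)=s(n-1)+(n-s)(n-s-1)$ and $e(T_n^t)=(n-t-1)(n-1)+(t+1)t$. These show that $S_n^k$ has the most edges among the candidates for $n\ge 2k+1$, since $e$ is a convex function of the parameter minimised in the middle of the range and symmetric. However, $\rho$ is not determined by $e$ alone. So the strategy is to treat each family's $\rho^2$ (or $q$) as a function $f(s)$ of a real variable. We would show $f$ is decreasing on the first half and increasing afterwards, using the relation $T_n^t\cong S_n^{n-t-1}$. That reduces everything to comparing $S_n^k$ with the far endpoint $S_n^{n-k-1}=T_n^{k}$.

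For that endpoint comparison, we would first try plugging the candidate value $x=\rho^2(S_n^k)$ into the characteristic polynomial of $T_n^k$'s quotient and checking the sign. As a fallback, we would use the bounds $\rho^2\ge e^2/(|X||Y|)$-type lower bounds for $S_n^k$ against Hong-type upper bounds for the other graph. For $q$ we would proceed analogously with the largest root of the cubic or quartic. This is the step where the linear range $n\ge 2k+1$ has to be used carefully, and the boundary case $n=2k+1$ may need a direct check.
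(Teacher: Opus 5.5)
Your reduction to Theorem~\ref{th:ch-1.8} matches the paper's, but the inequality that carries the theorem is not proved. That inequality is $\lambda(S_n^s)<\lambda(S_n^k)$ for every $k<s\le n-k-1$. It also covers every admissible $T_n^t\cong S_n^{n-t-1}$, since then $k<n-t-1\le n-k-1$. You assert without argument that $s\mapsto\rho(S_n^s)^2$ (or $q(S_n^s)$) first decreases and then increases, and the endpoint comparison with $S_n^{n-k-1}\cong T_n^k$ is only something you would try. Your fallback cannot work. For $n=2k+2$ the competitor $T_n^k\cong S_n^{k+1}$ has exactly one edge fewer than $S_n^k$. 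Already for $n=4$, $k=1$, the lower bound $e(S_4^1)^2/12=27/4$ is below even the elementary bound $\rho(S_4^2)^2\le e(S_4^2)=8$.

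The missing idea, used in Lemmas~\ref{le:ch-4.2} and~\ref{le:ch-4.3}, is that unimodality is unnecessary. Instead, evaluate every competitor's polynomial at the single point $\mu_k=\rho(S_n^k)^2$. Here $p_s(z)=z^2-T_sz+D_s$, with $T_s=n(n-1)-s(n-s)$ and $D_s=f(s)$ for $f(u)=u^2(n-u)(n-u-1)$, satisfies $\det(xI-B_A(s))=p_s(x^2)$. Since $p_k(\mu_k)=0$,
\[
p_s(\mu_k)=(s-k)(n-s-k)\,\mu_k+f(s)-f(k)>0,
\]
because concavity of $u(n-u)$ and $u(n-u-1)$ gives $f(s)\ge f(k)$ on $[k,n-k-1]$. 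Also $S_n^k$ properly contains $K_{n,n-k-1}$, so $\mu_k>n(n-k-1)$. Hence $p_s'(\mu_k)=2\mu_k-T_s>n(n-2k-1)+s(n-s)>0$. So $\mu_k$ lies beyond the larger root $\mu_s$ of $p_s$.

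For $q$ the same device works with the cubic $g_s$. Put $r=n-s-k-1\ge0$. Then $g_s(\theta_k)=(s-k)\bigl((2r+1)\theta_k-(2n-1)r\bigr)>0$, using $\theta_k>2n-k-1$. Moreover $g_s$ is convex on $[2n-k-1,\infty)$ with $g_s'(2n-k-1)>0$, hence increasing there, so its largest root is below $\theta_k$.

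Your identification step also has a hole. The ``shortcut'' is circular. Strict monotonicity gives $\lambda(G)\le\lambda(S_n^s)$ only if $G$ is a spanning subgraph of a copy of $S_n^s$. That holds only if the vertices of $Y_2$ already share one common $s$-set of neighbours, which is exactly what was to be shown. The paper handles arbitrary members of $\mathcal S_n^s$ and $\mathcal T_n^t$ by the Perron-vector switching of Lemma~\ref{le:ch-4.4}.

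Your primary claim is nevertheless true. The vague ``Hall-type argument'' can be replaced by the closure argument from the proof of Theorem~\ref{th:ch-1.5}, applied to the balanced graph $G'$ obtained by joining a new vertex $y'$ to all of $X$. Every non-edge $xy$ with $x\in N_G(Y_2)$ and $y\in Y_2$ has $d_{G'}(x)+d_{G'}(y)\ge(n-s+1)+s$. So the closure joins $N_G(Y_2)$ completely to $Y_2$. If $|N_G(Y_2)|\ge s+1$, every remaining non-edge then has degree sum at least $(n-s)+(s+1)$. The closure is then $K_{n,n}$, and $G$ is traceable by Theorem~\ref{th:ch-2.1}. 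The analogous computation for $\mathcal T_n^t$ forces $|N_G(X_2)|=t$. This yields $G\cong S_n^s$ or $G\cong T_n^t$ outright, and is a more elementary substitute for Lemma~\ref{le:ch-4.4}.

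Finally, your reason for the non-traceability of $S_n^s$ is miscounted. Deleting $X_1$ leaves $s+1$ components, which a graph with a Hamilton path can have. The contradiction appears only in $G'$, where $s+1>|X_1|$ components rule out a Hamilton cycle.
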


The remainder of the paper is organized as follows. Section 2 introduces the preliminary tools used throughout the paper. In Section 3, we prove the feasible-parameter extremal theorem for non-Hamiltonian balanced bipartite graphs and derive the corresponding sharp spectral Hamiltonicity results. In Section 4, we treat traceability in nearly balanced bipartite graphs and prove the corresponding feasible-parameter extremal
theorems together with their spectral consequences.

\section{Preliminaries}

In this section, we collect several preliminary lemmas used in the proofs of the main results.

We begin with the notion of bipartite closure, which is a basic tool in the study of Hamilton cycles in balanced bipartite graphs.

For a balanced bipartite graph $G=G[X,Y]$ with $|X|=|Y|=n$, let $\operatorname{cl}_B(G)$ denote the bipartite closure of $G$, obtained by repeatedly adding an edge $xy$ between nonadjacent vertices $x\in X$ and $y\in Y$ whenever their degree sum in the current graph is at least $n+1$, until no such pair of vertices remains.

\begin{thm}[Bondy and Chv\'atal \cite{BoC1976}]\label{th:ch-2.1}
Let $G$ be a balanced bipartite graph of order $2n$. Then $G$ is Hamiltonian
if and only if $\operatorname{cl}_B(G)$ is Hamiltonian.
\end{thm}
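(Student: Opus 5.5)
The plan is to reduce the statement to a single edge-addition step and then iterate along the sequence of additions that produces $\operatorname{cl}_B(G)$. Write $G=G_0\subseteq G_1\subseteq\cdots\subseteq G_m=\operatorname{cl}_B(G)$, where $G_{j+1}=G_j+x_jy_j$ with $x_j\in X$, $y_j\in Y$, $x_jy_j\notin E(G_j)$ and $d_{G_j}(x_j)+d_{G_j}(y_j)\ge n+1$. Since adding edges preserves Hamiltonicity, the direction ``$G$ Hamiltonian $\Rightarrow$ $\operatorname{cl}_B(G)$ Hamiltonian'' is trivial. It therefore suffices to prove the following claim and apply it $m$ times backwards along the sequence.

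\emph{Claim.} If $H=H[X,Y]$ is balanced of order $2n$, $x\in X$, $y\in Y$, $xy\notin E(H)$, $d_H(x)+d_H(y)\ge n+1$, and $H+xy$ is Hamiltonian, then $H$ is Hamiltonian.

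To prove the claim, suppose for contradiction that $H$ is not Hamiltonian. Then every Hamilton cycle of $H+xy$ uses the edge $xy$. Deleting that edge gives a Hamilton path $v_1v_2\cdots v_{2n}$ in $H$ with $v_1=x$ and $v_{2n}=y$. Since the path alternates between the parts, $v_i\in X$ for odd $i$ and $v_i\in Y$ for even $i$.

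Next we look for a crossing pair. Suppose that for some even $i$ with $2\le i\le 2n$ we have $v_i\in N_H(x)$ and $v_{i-1}\in N_H(y)$; this requires $i\ne 2$, since $v_1=x$ is not adjacent to $y$. Then
\[
v_1v_iv_{i+1}\cdots v_{2n}v_{i-1}v_{i-2}\cdots v_1
\]
is a Hamilton cycle in $H$, which is a contradiction. Note that $v_{i-1}\in X$ and $y\in Y$, so the edge $v_{i-1}y$ respects the bipartition.

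We now count to show that such a pair must exist. Define
\[
S=\{\,i-1 : v_i\in N_H(x)\,\}\quad\text{and}\quad T=\{\,j \text{ odd} : v_j\in N_H(y)\,\}.
\]
Both sets lie in the $n$ odd indices $\{1,3,\dots,2n-1\}$, and we have $|S|=d_H(x)$ and $|T|=d_H(y)$. By the previous step, $S\cap T=\emptyset$. Hence $d_H(x)+d_H(y)\le n$, which contradicts the hypothesis $d_H(x)+d_H(y)\ge n+1$. This proves the claim and hence the theorem.

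The only delicate point is the bookkeeping in the counting step. One must check that the map from neighbors of $x$ to predecessor positions lands in the odd indices. One must also check that index $1$ causes no trouble: it may lie in $S$, but it never lies in $T$, because $xy\notin E(H)$. With both checks in place, the count is over exactly $n$ slots and yields the sharp threshold $n+1$.
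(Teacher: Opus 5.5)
Your proof is correct. The reduction to a single edge addition and the rotation $v_1v_iv_{i+1}\cdots v_{2n}v_{i-1}\cdots v_1$ are sound. So is the count of $S$ and $T$ inside the $n$ odd positions. The extreme case $i=2n$ is also excluded automatically, since $y\notin N_H(x)$.

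The paper gives no proof of this statement; it only cites Bondy and Chv\'atal. Your argument is the standard Ore-type crossing argument behind their bipartite closure lemma. One small advantage is that it works for any order of edge additions, so it does not rely on $\operatorname{cl}_B(G)$ being independent of that order.
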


We recall the standard quotient-matrix method for equitable partitions, which will be
used in the spectral computations below. Let $M$ be a matrix whose rows and columns
are indexed by a finite set $V$, and let
\[
\Pi=\{V_1,V_2,\ldots,V_m\}
\]
be a partition of $V$. If, for every $1\le i,j\le m$, each row of the block
$M[V_i,V_j]$ has the same sum $b_{ij}$, then $\Pi$ is called an equitable
partition of $M$, and the matrix $B=(b_{ij})$ is called the quotient matrix
of $M$ with respect to $\Pi$.

\begin{lem}[Brouwer and Haemers~\cite{BrouwerHaemers2012}]\label{le:ch-2.2}
	Let $M$ be a nonnegative irreducible symmetric matrix, and let $\Pi$ be an equitable
	partition of $M$ with quotient matrix $B$. Then every eigenvalue of $B$ is an eigenvalue of $M$. Moreover, the Perron eigenvalue of $B$ is equal to the Perron eigenvalue of $M$. Here the Perron eigenvalue of a nonnegative irreducible matrix means its positive eigenvalue whose modulus is maximal.
\end{lem}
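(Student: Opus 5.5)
The plan is to prove Lemma~\ref{le:ch-2.2} with the characteristic matrix of the partition, which turns the equitability condition into a single matrix identity. Let $|V|=N$, and let $S$ be the $N\times m$ matrix with $S_{v,i}=1$ if $v\in V_i$ and $S_{v,i}=0$ otherwise. Then $(MS)_{v,j}=\sum_{u\in V_j}M_{vu}$ is the row sum of the block $M[V_i,V_j]$ at row $v\in V_i$, which equals $b_{ij}=(SB)_{v,j}$. Hence equitability says exactly
\[
MS=SB .
\]
The columns of $S$ are indicator vectors of disjoint nonempty sets, so $S$ has full column rank $m$.

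For the first assertion, let $\theta$ be an eigenvalue of $B$, possibly complex, with eigenvector $x\neq 0$. Then
\[
M(Sx)=SBx=\theta\, Sx ,
\]
and $Sx\neq 0$ because $S$ has full column rank. So $\theta$ is an eigenvalue of $M$ with eigenvector $Sx$, which is $x$ lifted to be constant on each part.

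For the Perron statement, I first check that $B$ is irreducible. Suppose some proper nonempty set $J$ of part indices had $b_{ij}=0$ for all $i\in J$ and $j\notin J$. Then the union of the $V_i$ with $i\in J$ would be a proper invariant block of $M$, since all entries of $M$ are nonnegative. This contradicts the irreducibility of $M$. Since $B$ is nonnegative and irreducible, Perron--Frobenius gives its Perron eigenvalue $\rho(B)>0$ and a positive eigenvector $x$. (Only nonnegativity of $x$ is needed below, so the weak form of Perron--Frobenius would also suffice.) By the computation above, $Sx$ is a nonzero nonnegative eigenvector of $M$ for the eigenvalue $\rho(B)$; indeed it is positive. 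For a nonnegative irreducible matrix, every nonnegative eigenvector belongs to the Perron eigenvalue. To see this, let $y>0$ be a Perron vector of $M^{\top}=M$. Then
\[
\rho(M)\,y^{\top}Sx=y^{\top}MSx=\rho(B)\,y^{\top}Sx ,
\]
and $y^{\top}Sx>0$, so $\rho(B)=\rho(M)$.

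The only step needing care is identifying the Perron eigenvalue of $B$, because $B$ is generally not symmetric. Variational arguments therefore do not apply directly. The Perron-vector pairing above avoids this issue: it uses only the nonnegativity of the lifted eigenvector and the symmetry of $M$ (or merely a left Perron vector of $M$).
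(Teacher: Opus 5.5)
Your proof is correct and is essentially the standard argument behind the cited result of Brouwer and Haemers: the identity $MS=SB$ lifts eigenvectors of $B$ to eigenvectors of $M$, and pairing the lifted nonnegative Perron vector of $B$ with the positive Perron vector of $M$ gives $\rho(B)=\rho(M)$ (the paper itself gives no proof and only cites the result). Your extra verification that $B$ is irreducible is a useful addition, since it shows that ``the Perron eigenvalue of $B$'' is well defined in the sense stated in the lemma.
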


\begin{lem}\label{le:ch-2.3}
Let $G=G[X,Y]$ be a bipartite graph. Let $U\subseteq X$ and
$W\subseteq Y$ with $|W|=r$. Suppose that $d_G(u)\le r$ for every $u\in U$. Then there exists a graph $\Gamma$ obtained from $G$ by a finite sequence of Kelmans operations inside the part $Y$ such that $N_\Gamma(U)\subseteq W$ and $d_\Gamma(u)=d_G(u)$ for every $u\in U$. Consequently, if a graph parameter $\mathcal P$ is nondecreasing under the Kelmans operations used in this sequence, then $\mathcal P(G)\le \mathcal P(\Gamma)$.
\end{lem}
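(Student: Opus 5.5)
We prove Lemma~\ref{le:ch-2.3} by induction on the number of ``bad'' edges,
\[
\phi(G):=\bigl|\{uy\in E(G): u\in U,\ y\in Y\setminus W\}\bigr|.
\]
If $\phi(G)=0$, then $N_G(U)\subseteq W$ and we take $\Gamma=G$. Otherwise we will find a single Kelmans operation $G[y\to w]$ with $y\in Y\setminus W$ and $w\in W$ that strictly decreases $\phi$. It must also preserve the degrees of all vertices of $U$ and keep the hypothesis $d(u)\le r$ on $U$. Iterating this step terminates after finitely many operations, because $\phi$ is a nonnegative integer.

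To find the operation, pick a bad edge $u_0y$ with $u_0\in U$ and $y\in Y\setminus W$. Since $d_G(u_0)\le r=|W|$ and $u_0$ already has the neighbor $y\notin W$, $u_0$ has at most $r-1$ neighbors in $W$. Hence there is some $w\in W$ with $u_0w\notin E(G)$. Apply $G[y\to w]$. This operation moves every $z\in N_G(y)\setminus N_G(w)$ from $y$ to $w$, and $z$ ranges over $X$ since both $y,w\in Y$. We record three facts about its effect.

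First, every vertex $z\in X$ keeps its degree. Either $z$ is unaffected, or it loses the edge $zy$ and gains the edge $zw$, which was absent before. Hence $d_\Gamma(u)=d_G(u)$ for all $u\in U$, and the bound $d(u)\le r$ persists.

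Second, no bad edge is created. Edges only change at the endpoints $y$ and $w$, and the only edges gained are ones at $w\in W$, which are not bad.

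Third, at least one bad edge is destroyed. Since $u_0\in N_G(y)\setminus N_G(w)$, the bad edge $u_0y$ is replaced by the good edge $u_0w$. Therefore $\phi$ drops by at least one.

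All operations in the sequence are of the form $[y\to w]$ with $y,w\in Y$, that is, Kelmans operations inside the part $Y$. The result of each operation is again a bipartite graph with the same bipartition, so the induction continues in the same setting. The final graph $\Gamma$ satisfies $N_\Gamma(U)\subseteq W$ and $d_\Gamma(u)=d_G(u)$ for every $u\in U$.

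For the consequence, if $\mathcal P$ is nondecreasing under each operation used, then chaining the inequalities along the sequence gives $\mathcal P(G)\le\mathcal P(\Gamma)$. One point needs care here. Feasibility in Definition~\ref{def:feasible}(ii) only requires $\mathcal P(G[x\to y])\ge \mathcal P(G)$ under the side condition $xy\notin E(G)$. In the present situation this condition is automatic, since $y$ and $w$ lie in the same part $Y$ of a bipartite graph and so are never adjacent. The lemma is phrased directly as ``nondecreasing under the Kelmans operations used'', so no further hypothesis is needed.

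The only step requiring care is the existence of $w$. This is exactly where the hypothesis $d_G(u_0)\le |W|$ is used, and it is what makes the reduction go through. Everything else is bookkeeping of degrees under the Kelmans operation.
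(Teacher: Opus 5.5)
Your proposal is correct and follows essentially the same argument as the paper. Both use the potential counting edges from $U$ to $Y\setminus W$, pick $w\in W$ nonadjacent to $u_0$ using $d(u_0)\le |W|$, and apply $[y\to w]$, which preserves degrees in $X$, creates no bad edge, and strictly decreases the potential.
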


\begin{proof}
Let $\Gamma$ be the current graph, initially equal to $G$. We shall
repeatedly apply Kelmans operations inside the part $Y$. During the process,
the graph remains bipartite with the same bipartition $X,Y$.

Define $\Phi(\Gamma)=\sum_{u\in U}|N_\Gamma(u)\setminus W|$. If $\Phi(\Gamma)=0$, then $N_\Gamma(U)\subseteq W$, and we are done.

Suppose that $\Phi(\Gamma)>0$. Then there exist $u\in U$ and
$y\in Y\setminus W$ such that $uy\in E(\Gamma)$. Since every Kelmans
operation inside $Y$ preserves the degrees of vertices in $X$, we have
\[
d_\Gamma(u)=d_G(u)\le r=|W|.
\]
As $u$ has a neighbor outside $W$, it cannot be adjacent to all vertices
of $W$. Hence there exists a vertex $w\in W$ such that $uw\notin E(\Gamma)$.

Now replace $\Gamma$ by $\Gamma[y\to w]$. Since $y$ and $w$ lie in the
same part $Y$, this is a Kelmans operation inside $Y$. For every vertex
$x\in X$ adjacent to $y$ but not to $w$, the operation deletes the edge
$xy$ and adds the edge $xw$. Thus the degree of every vertex of $X$, and
in particular of every vertex of $U$, is preserved.

Moreover, this operation creates no new edge from a vertex of $U$ to $Y\setminus W$; it only replaces edges incident with $y\in Y\setminus W$ by corresponding edges incident with $w\in W$. Since the chosen edge $uy$ is moved to $uw$, the
quantity $\Phi(\Gamma)$ strictly decreases.

The quantity $\Phi(\Gamma)$ is a nonnegative integer. Therefore the process
terminates after finitely many steps. At the end, we obtain a graph
$\Gamma$ with $N_\Gamma(U)\subseteq W$. Since every operation preserves
the degrees of vertices in $X$, we also have $d_\Gamma(u)=d_G(u)$ for every $u\in U$.

Finally, if $\mathcal{P}$ is nondecreasing under the Kelmans operations used above,
then applying this monotonicity at each step gives $\mathcal{P}(G)\le \mathcal{P}(\Gamma)$.
This completes the proof.
\end{proof}

\section{Proofs for Hamiltonicity of balanced bipartite graphs}
In this section, we prove Theorems~\ref{th:ch-1.5} and~\ref{th:ch-1.6}. We first establish one lemma for balanced bipartite graphs.

\begin{lem}\label{le:ch-3.1}
Let $G=G[X,Y]$ be a balanced bipartite graph with $\delta(G)\geq k$ and $|X|=|Y|=n$, where $k\geq 1$ and $n\geq 2$. If $G$ has no Hamilton cycle, then there exists an integer $s$ with $k\le s\le \left\lfloor \frac n2 \right\rfloor$ such that one of $X$ and $Y$ contains $s$ vertices of degree at most $s$.
\end{lem}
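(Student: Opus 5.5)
We pass to the bipartite closure and then apply a Chvátal-type degree-sequence argument. Let $H=\operatorname{cl}_B(G)$. By Theorem~\ref{th:ch-2.1}, $H$ is not Hamiltonian, since $G$ is not. In particular $H\neq K_{n,n}$, because $K_{n,n}$ is Hamiltonian for $n\ge 2$. Degrees only grow, so $\delta(H)\ge k$. Moreover, every pair of nonadjacent $x\in X$, $y\in Y$ in $H$ satisfies $d_H(x)+d_H(y)\le n$.

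Among all nonadjacent pairs $(x,y)$ with $x\in X$, $y\in Y$, choose one maximizing $d_H(x)+d_H(y)$. By symmetry we may assume $d_H(x)\le d_H(y)$, and we set $s=d_H(x)$. Then $k\le s$, and $2s\le d_H(x)+d_H(y)\le n$ gives $s\le\lfloor n/2\rfloor$.

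Next we count low-degree vertices in $Y$. Every $y'\in Y\setminus N_H(x)$ is nonadjacent to $x$, so $d_H(y')\le n-s$; this gives $n-s$ vertices of $Y$ with degree at most $n-s$. The more useful count is in $X$. Every $x'\in X\setminus N_H(y)$ is nonadjacent to $y$. By the maximality of the pair, $d_H(x')+d_H(y)\le d_H(x)+d_H(y)$, so $d_H(x')\le s$. There are $n-d_H(y)$ such vertices, and $n-d_H(y)\ge n-(n-s)=s$. Hence $X$ contains at least $s$ vertices of $H$-degree at most $s$.

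Finally we transfer this back to $G$. Since $G$ is a spanning subgraph of $H$, we have $d_G(v)\le d_H(v)$ for every vertex $v$. The same $s$ vertices therefore have degree at most $s$ in $G$, which proves the lemma. If instead $d_H(y)<d_H(x)$, the symmetric argument produces the vertices in $Y$.

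The main point needing care is the maximality step. We must take the nonadjacent pair with maximum degree sum, not an arbitrary one, so that every non-neighbor of $y$ has degree at most $d_H(x)$. We also need to check that this choice still yields $s\le n/2$, which follows from $d_H(x)\le d_H(y)$ together with the closure bound $d_H(x)+d_H(y)\le n$. No deeper Hamiltonicity input is required beyond Theorem~\ref{th:ch-2.1} and the fact that the closure is not complete.
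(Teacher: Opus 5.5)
Your proposal is correct and follows essentially the same argument as the paper: you pass to $\operatorname{cl}_B(G)$, choose a non-edge $xy$ of maximum degree sum with $d_H(x)\le d_H(y)$, set $s=d_H(x)$, and use maximality to show that the at least $s$ non-neighbours of $y$ in $X$ all have degree at most $s$. The preliminary count of low-degree vertices in $Y$ is harmless but not needed.
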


\begin{proof}
Let $H=\operatorname{cl}_B(G)$. Since $G$ has no Hamilton cycle,
Theorem~\ref{th:ch-2.1} implies that $H$ has no
Hamilton cycle. Hence $H\ne K_{n,n}$.

Since $H\ne K_{n,n}$, choose $x\in X$ and
$y\in Y$ with $xy\notin E(H)$ such that
$d_{H}(x)+d_{H}(y)$ is maximum. By symmetry, we may assume that
$d_H(x)\le d_H(y)$.

Since $xy\notin E(H)$, the definition of the bipartite closure gives $d_H(x)+d_H(y)\le n$. Put $s=d_H(x)$. Since $H$ is obtained from $G$ by adding edges, we have
\[
s=d_H(x)\ge d_G(x)\ge \delta(G)\ge k.
\]
Moreover, from $d_H(x)\le d_H(y)$ and $d_H(x)+d_H(y)\le n$, we get
\[
2s=2d_H(x)\le n,
\]
and hence $s\le \left\lfloor \frac n2\right\rfloor$.

It remains to find $s$ vertices of degree at most $s$. Since
$d_H(y)\le n-s$, we have
\[
|X\setminus N_H(y)|=n-d_H(y)\ge s.
\]
For every $x'\in X\setminus N_H(y)$, we have $x'y\notin E(H)$. By the
maximal choice of the non-edge $xy$,
\[
d_H(x')+d_H(y)\le d_H(x)+d_H(y),
\]
and therefore $d_H(x')\le d_H(x)=s$. Thus $X\setminus N_H(y)$ contains at least $s$ vertices whose degrees in $H$ are at most $s$. Since $G$ is a subgraph of $H$, these vertices also have degree at most $s$ in $G$. Hence $X$ contains $s$ vertices of degree at most $s$ in $G$.
This proves the lemma.
\end{proof}

\begin{proof}[Proof of Theorem~\ref{th:ch-1.5}]
Let $G=G[X,Y]$ be a non-Hamiltonian balanced bipartite graph of order
$2n$ with $\delta(G)\ge k$ such that
\[
\mathcal P(G)=\operatorname{ex}_{\mathcal P}(2n,C_{2n};\delta\ge k).
\]

By Lemma~\ref{le:ch-3.1}, there exists an integer $s$ with $k\le s\le \left\lfloor\frac n2\right\rfloor$ such that one of $X$ and $Y$ contains $s$ vertices of degree at most $s$.
By symmetry, we may assume that there is a set $S=\{x_1,x_2,\ldots,x_s\}\subseteq X$ such that $d_G(x_i)\le s$ for every $i=1,\ldots,s$.

Choose an arbitrary subset $W\subseteq Y$ with $|W|=s$. Put
\[
X_1=S,\qquad X_2=X\setminus S,\qquad
Y_1=W,\qquad Y_2=Y\setminus W.
\]
Then
\[
|X_1|=|Y_1|=s,\qquad |X_2|=|Y_2|=n-s.
\]

Let $H$ be the graph obtained from $G$ by adding all missing edges
between $X_2$ and $Y$. Since $X_2\ne\emptyset$ and every vertex of $S$ has degree at least $k\ge1$, the graph $H$ is connected.
Then the degrees of the vertices in $S$ are
unchanged, and hence $d_H(x_i)=d_G(x_i)\le s$ for every $i=1,\ldots,s$. If $G\ne H$, then $G$ is a proper spanning subgraph of the connected graph $H$, and hence $\mathcal P(G)<\mathcal P(H)$ by the additional assumption. Thus $\mathcal P(G)\le \mathcal P(H)$.

By Lemma~\ref{le:ch-2.3}, applied to $H[X,Y]$ with $U=S$ and $W=Y_1$,
there exists a graph $\Gamma$ obtained from $H$ by a finite sequence of
Kelmans operations inside the part $Y$ such that $N_\Gamma(S)\subseteq Y_1$ and $d_\Gamma(x_i)=d_H(x_i)=d_G(x_i)$ for every $i=1,\ldots,s$.

Moreover, all graphs arising in this Kelmans sequence are connected.
Indeed, the complete bipartite subgraph $H[X_2,Y]=K_{n-s,n}$ is preserved throughout the Kelmans operations, and every vertex of $S$ remains adjacent to some vertex of $Y$. Hence the Kelmans monotonicity of $\mathcal P$ gives $\mathcal P(H)\le \mathcal P(\Gamma)$. Since $N_\Gamma(S)\subseteq Y_{1}$, the graph $\Gamma$ has no edge between $X_1$ and $Y_2$. Hence $\Gamma$ is a subgraph of $B_n^s$ with respect to the above partition. If $\Gamma\ne B_n^s$, then $\Gamma$ is a proper spanning subgraph of the
connected graph $B_n^s$, and hence $\mathcal P(\Gamma)<\mathcal P(B_n^s)$. Thus $\mathcal P(\Gamma)\le \mathcal P(B_n^s)$. Consequently,
\[
\mathcal P(G)
\le \mathcal P(H)
\le \mathcal P(\Gamma)
\le \mathcal P(B_n^s)
\le
\max\left\{
\mathcal P(B_n^t):
k\le t\le \left\lfloor\frac n2\right\rfloor
\right\}.
\]
Thus
\[
\operatorname{ex}_{\mathcal P}(2n,C_{2n};\delta\ge k)
\le
\max\left\{
\mathcal P(B_n^t):
k\le t\le \left\lfloor\frac n2\right\rfloor
\right\}.
\]

Conversely, for every integer $t$ with $k\le t\le \left\lfloor\frac n2\right\rfloor$, the graph $B_n^t$ satisfies $\delta(B_n^t)=t\ge k$. 
Moreover, $B_n^t$ is non-Hamiltonian. Indeed, deleting the $t$ vertices of $Y_1$ leaves the $t$ vertices of $X_1$ as isolated vertices together with one further component. Thus the remaining graph has $t+1$ components after deleting $t$ vertices, which is impossible for a Hamiltonian graph.

Therefore every graph $B_n^t$ in the indicated range is admissible in
the extremal problem. Hence
\[
\ex_{\mathcal P}(2n,C_{2n};\delta\ge k)
\ge
\max\left\{
\mathcal P(B_n^t):
k\le t\le \left\lfloor\frac n2\right\rfloor
\right\}.
\]
Combining the two inequalities gives
\[
\ex_{\mathcal P}(2n,C_{2n};\delta\ge k)
=
\max\left\{
\mathcal P(B_n^t):
k\le t\le \left\lfloor\frac n2\right\rfloor
\right\}.
\]

It remains to determine the extremal graphs. Suppose that $G$ is
extremal. Then
\[
\mathcal P(G)=
\max\left\{
\mathcal P(B_n^t):
k\le t\le \left\lfloor\frac n2\right\rfloor
\right\}.
\]
Hence all inequalities in
\[
\mathcal P(G)
\le \mathcal P(H)
\le \mathcal P(\Gamma)
\le \mathcal P(B_n^s)
\le
\max\left\{
\mathcal P(B_n^t):
k\le t\le \left\lfloor\frac n2\right\rfloor
\right\}
\]
must be equalities.

Since $G$ is a spanning subgraph of the connected graph $H$, the equality
$\mathcal P(G)=\mathcal P(H)$ and the additional assumption force $G=H$.
Hence $G[X\setminus S,Y]=K_{n-s,n}$.

Moreover, the equality $\mathcal P(\Gamma)=\mathcal P(B_n^s)$ forces $\Gamma=B_n^s$, because $B_n^s$ is obtained from $\Gamma$ by adding edges and $\mathcal P$ is strictly increasing under edge addition. All Kelmans operations used above were performed inside the part $Y$, so the degrees of vertices in $X$ are preserved. Since $\Gamma=B_n^s$, every vertex of $S$ has degree $s$ in $G$.

We claim that $G\cong B_n^s$. Suppose not. Since every vertex of $S$ has
degree $s$ and $G[X\setminus S,Y]=K_{n-s,n}$, the assumption
$G\not\cong B_n^s$ implies $|N_G(S)|\ge s+1$. Indeed, if $|N_G(S)|=s$, then every vertex of $S$ has degree $s$ and
therefore has the same neighborhood $N_G(S)$, which gives
$G\cong B_n^s$.

Let $y\in N_G(S)$. Since $y$ is adjacent to every vertex of
$X\setminus S$ and to at least one vertex of $S$, we have $d_G(y)\ge n-s+1$. Thus, for every non-edge $xy$ with $x\in S$ and $y\in N_G(S)$,
\[
d_G(x)+d_G(y)\ge s+(n-s+1)=n+1.
\]
Hence all missing edges between $S$ and $N_G(S)$ are added in the
bipartite closure $\operatorname{cl}_B(G)$.

After these edges are added, every vertex $x\in S$ has degree at least $|N_G(S)|\ge s+1$ in the closure. On the other hand, for every $y\in Y\setminus N_G(S)$, we have $d_G(y)=n-s$, because $y$ is adjacent to every vertex of $X\setminus S$ and to no
vertex of $S$. Therefore
\[
d_{\operatorname{cl}_B(G)}(x)+d_{\operatorname{cl}_B(G)}(y)
\ge (s+1)+(n-s)=n+1.
\]
It follows that all missing edges between $S$ and $Y\setminus N_G(S)$
are also added in the bipartite closure. Consequently, $\operatorname{cl}_B(G)=K_{n,n}$. Thus $\operatorname{cl}_B(G)$ has a Hamilton cycle. By
Theorem~\ref{th:ch-2.1}, the graph $G$ has a Hamilton cycle, a
contradiction. Hence $G\cong B_n^s$.

Therefore
\[
\EX_{\mathcal P}(2n,C_{2n};\delta\ge k)
\subseteq
\left\{
B_n^s:
k\le s\le \left\lfloor\frac n2\right\rfloor
\right\}.
\]
This completes the proof.
\end{proof}

\begin{lem}\label{le:ch-3.2}
Let $k\geq 1$ and $n\geq 2k$. Then
\[
\max_{k\le s\le \lfloor n/2\rfloor}\rho(B_n^s)=\rho(B_n^k)
\quad\text{and}\quad
\max_{k\le s\le \lfloor n/2\rfloor}q(B_n^s)=q(B_n^k).
\]
Moreover, in both maxima equality holds only when $s=k$.
\end{lem}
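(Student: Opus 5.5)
The plan is to compute both spectral radii of $B_n^s$ through equitable quotient matrices (Lemma~\ref{le:ch-2.2}) and then show that, as a function of $s$, each is strictly decreasing on $k\le s\le \lfloor n/2\rfloor$. Strict decrease gives both the maximum at $s=k$ and the uniqueness claim at once. We use the partition $\{X_1,X_2,Y_1,Y_2\}$ of $B_n^s$. In $B_n^s$, $X_1$ is joined only to $Y_1$ and $X_2$ is joined to all of $Y$. Hence the degrees are $s$ on $X_1$, $n$ on $X_2$, $n$ on $Y_1$ and $n-s$ on $Y_2$, and this partition is equitable for both $A$ and $Q$.

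\emph{Adjacency case.} We write the eigen-equations on the parts, $\rho x_1=s y_1$, $\rho x_2=s y_1+(n-s)y_2$, $\rho y_1=s x_1+(n-s)x_2$ and $\rho y_2=(n-s)x_2$, and eliminate $x_1,x_2$. This shows that $\mu=\rho^2$ is the largest eigenvalue of
\[
M_s=\begin{pmatrix} sn & (n-s)^2\\ s(n-s) & (n-s)^2\end{pmatrix},
\]
whose trace is $T(s)=n^2-sn+s^2$ and whose determinant is $D(s)^2$ with $D(s)=s(n-s)$. Thus
\[
\rho(B_n^s)^2=\tfrac12\Bigl(T(s)+\sqrt{T(s)^2-4D(s)^2}\Bigr).
\]
This expression is increasing in $T$ and decreasing in $D$ wherever $T\ge 2D$, and here $T-2D=(n-s)^2+s^2-sn+\ldots\ge 0$, since $T^2-4D^2\ge 0$ is automatic for a real Perron root. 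On $[k,n/2]$ we have $T'(s)=2s-n\le 0$ and $D'(s)=n-2s\ge 0$, with strict inequalities for $s<n/2$. Comparing integers $s<s'\le\lfloor n/2\rfloor$ therefore gives $T(s)>T(s')$ and $D(s)<D(s')$. Hence $\rho(B_n^s)>\rho(B_n^{s'})$, which settles the adjacency part, including uniqueness.

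\emph{Signless Laplacian case.} The quotient of $Q$ is the $4\times4$ matrix obtained from the equations $qx_1=sx_1+sy_1$, $qx_2=nx_2+sy_1+(n-s)y_2$, $qy_1=ny_1+sx_1+(n-s)x_2$ and $qy_2=(n-s)y_2+(n-s)x_2$. Let $\varphi_s(x)$ denote its characteristic polynomial; by Lemma~\ref{le:ch-2.2}, $q(B_n^s)$ is the largest root of $\varphi_s$. The plan is to show $\varphi_{s+1}(x)>0$ for all $x\ge q(B_n^s)$ whenever $k\le s<\lfloor n/2\rfloor$, so that $q(B_n^{s+1})<q(B_n^s)$. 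To do this we compute $\varphi_{s+1}(x)-\varphi_s(x)$ explicitly as a polynomial in $x$, $s$ and $n$. We then show it is positive for $x\ge q(B_n^s)$. For this we use crude a priori bounds such as $q(B_n^s)\ge \Delta+1=n+1$ (true since $\Delta=n$ and the graph is connected and not a star) and $q(B_n^s)\le 2n$, together with $s+1\le n/2$.

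The main obstacle is this sign analysis of $\varphi_{s+1}-\varphi_s$ on the relevant interval. If it becomes messy, the first fallback is to treat $s$ as a real variable. Implicit differentiation then gives $dq/ds=-(\partial_s\varphi)/(\partial_x\varphi)$ at the largest root. Since $\partial_x\varphi>0$ there, it suffices to show $\partial_s\varphi_s(x)>0$ at $x=q(B_n^s)$ for $s\in[k,n/2)$, and this can be checked using the bound $x\in[n+1,2n]$ and the eigenvector relations to simplify.
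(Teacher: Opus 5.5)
Your proposal is correct and follows essentially the paper's route. Your $2\times 2$ matrix $M_s$ has trace $n^2-s(n-s)$ and determinant $s^2(n-s)^2$, which is exactly the paper's quartic $x^4-(n^2-u_s)x^2+u_s^2$ with $u_s=s(n-s)$, and your monotonicity in $T$ and $D$ is the paper's monotonicity in $u_s$. Your displayed expression for $T-2D$ is off: it equals $n^2-3sn+3s^2=(n-\tfrac32 s)^2+\tfrac34 s^2>0$. Your fallback justification via real eigenvalues is valid anyway.

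For $q$ you only give a plan, but the ``main obstacle'' you defer is not real. The quotient characteristic polynomial factors as $\varphi_s(x)=x(x-n)\bigl(x^2-2nx+2s(n-s)\bigr)$, so $q(B_n^s)=n+\sqrt{n^2-2s(n-s)}$, as in the paper. In your own framework this gives $\varphi_{s+1}(x)-\varphi_s(x)=2(n-2s-1)\,x(x-n)>0$ for $x>n$. So neither the bound $q\le 2n$ nor implicit differentiation is needed.
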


\begin{proof}
Put $m=\lfloor n/2\rfloor$ and
\[
u_s=s(n-s).
\]
Then $u_s$ is strictly increasing for $1\le s\le m$.

Let $X=X_1\cup X_2$ and $Y=Y_1\cup Y_2$ be the bipartition of
$B_n^s$, where $|X_1|=|Y_1|=s$ and $|X_2|=|Y_2|=n-s$.
With respect to the equitable partition $X_1, X_2, Y_1, Y_2$, the adjacency quotient matrix of $B_n^s$ is
\[
B_A(s)=
\begin{pmatrix}
0 & 0 & s & 0\\
0 & 0 & s & n-s\\
s & n-s & 0 & 0\\
0 & n-s & 0 & 0
\end{pmatrix}.
\]
Since the partition is equitable, $\rho(B_n^s)$ is the Perron eigenvalue of
$B_A(s)$. A direct calculation gives
$$
\det(xI-B_A(s))
=
x^4-(n^2-u_s)x^2+u_s^2.
$$
Thus $\rho(B_n^s)$ satisfies
$$
\rho(B_n^s)^4-(n^2-u_s)\rho(B_n^s)^2+u_s^2=0.
$$
Equivalently, $\rho(B_n^s)^2$ is a root of
$$
z^2-(n^2-u_s)z+u_s^2=0.
$$
The two roots of this equation give the possible values of $x^2$ for the
eigenvalues $x$ of $B_A(s)$. Since $\rho(B_n^s)$ is the Perron eigenvalue of $B_A(s)$, $\rho(B_n^s)^2$ is the larger root. Hence $\rho(B_n^s)^2=\phi(u_s)$, where
$$
\phi(u)= \frac{n^2-u+\sqrt{(n^2-3u)(n^2+u)}}{2}.
$$
For $0\le u\le n^2/4$, we have
$$
\phi'(u)
=
-\frac12-\frac{n^2+3u}{2\sqrt{(n^2-3u)(n^2+u)}}<0.
$$
Thus $\rho(B_n^s)^2$, and hence $\rho(B_n^s)$, is strictly decreasing in $s$
for $1\le s\le m$. It follows that
\[
\max_{k\le s\le m}\rho(B_n^s)=\rho(B_n^k),
\]
and equality holds only when $s=k$.

Next we consider the signless Laplacian spectral radius. With respect to
the same equitable partition, the signless Laplacian quotient matrix is
\[
B_Q(s)=
\begin{pmatrix}
s & 0 & s & 0\\
0 & n & s & n-s\\
s & n-s & n & 0\\
0 & n-s & 0 & n-s
\end{pmatrix}.
\]
Indeed, the degrees of vertices in $X_1,X_2,Y_1,Y_2$ are respectively
$s,n,n,n-s$. Hence, by the quotient-matrix method, $q(B_n^s)$ is the largest eigenvalue
of $B_Q(s)$. 
A direct calculation gives
$$
\det(xI-B_Q(s))
=
x(x-n)\bigl(x^2-2nx+2u_s\bigr).
$$
Thus the eigenvalues of $B_Q(s)$ are $0$, $n$, and $n\pm\sqrt{n^2-2u_s}$. Since $u_s\le n^2/4$, the largest one is $n+\sqrt{n^2-2u_s}$. Therefore
$$
q(B_n^s)=n+\sqrt{n^2-2u_s}.
$$
Since $u_s$ is strictly increasing for $1\le s\le m$, the value
$q(B_n^s)$ is strictly decreasing in $s$ for $1\le s\le m$. Hence
\[
\max_{k\le s\le m}q(B_n^s)=q(B_n^k),
\]
and equality holds only when $s=k$. This completes the proof.
\end{proof}

\begin{proof}[Proof of Theorem~\ref{th:ch-1.6}]
Let $\lambda\in\{\rho,q\}$. Since $n\ge 2k$, we have
$1\le k\le \lfloor n/2\rfloor$. Both $\rho$ and $q$ are feasible graph
parameters. Moreover, for $\mathcal P=\rho$ or $\mathcal P=q$, the condition $\mathcal P(H)<\mathcal P(G)$ whenever $H$ is a proper spanning subgraph of a connected graph $G$ follows from the Perron--Frobenius Theorem. By applying Theorem~\ref{th:ch-1.5} to $\mathcal P=\lambda$, we obtain
\[
\spex_{\lambda}^{\mathrm B}(2n,C_{2n};\delta\ge k)
=
\max\left\{
\lambda(B_n^s):
k\le s\le \left\lfloor\frac n2\right\rfloor
\right\}.
\]
By Lemma~\ref{le:ch-3.2}, the maximum on the right-hand side is
attained uniquely at $s=k$. Hence
\[
\spex_{\lambda}^{\mathrm B}(2n,C_{2n};\delta\ge k)
=
\lambda(B_n^k).
\]

It remains to identify the extremal graphs. Let $G\in \SPEX_{\lambda}^{\mathrm B}(2n,C_{2n};\delta\ge k)$. By the equality case of Theorem~\ref{th:ch-1.5}, we have $G\cong B_n^s$ for some integer $s$ with $k\le s\le \left\lfloor\frac n2\right\rfloor$. Since $\lambda(G)=\lambda(B_n^k)$, the uniqueness of the maximum in Lemma~\ref{le:ch-3.2} forces
$s=k$. Thus $G\cong B_n^k$. Conversely, $B_n^k$ is non-Hamiltonian, satisfies $\delta(B_n^k)=k$, and attains the value $\lambda(B_n^k)$. Therefore
\[
\SPEX_{\lambda}^{\mathrm B}(2n,C_{2n};\delta\ge k)
=
\{B_{n}^{k}\}.
\]

Taking $\lambda=\rho$ gives
\[
\SPEX_{\rho}^{\mathrm B}(2n,C_{2n};\delta\ge k)
=
\{B_{n}^{k}\}
\quad\text{and}\quad
\spex_{\rho}^{\mathrm B}(2n,C_{2n};\delta\ge k)
=
\rho(B_n^k).
\]
Taking $\lambda=q$ gives
\[
\SPEX_{q}^{\mathrm B}(2n,C_{2n};\delta\ge k)
=
\{B_{n}^{k}\}
\quad\text{and}\quad
\spex_{q}^{\mathrm B}(2n,C_{2n};\delta\ge k)
=
q(B_n^k).
\]
This completes the proof.
\end{proof}

\section{Proofs for traceability of nearly balanced bipartite graphs}

\begin{lem}\label{le:ch-4.1}
Let $n\geq 2$ and $k\geq 1$, and let $G=G[X,Y]$ be a nearly balanced
bipartite graph with $|X|=n$ and $|Y|=n-1$. Suppose that
$\delta(G)\ge k$. If $G$ has no Hamilton path, then one of the following
holds.

\begin{itemize}
\item[(i)] There exists an integer $s$ with $k\le s\le \left\lfloor\frac n2\right\rfloor$ such that $Y$ contains $s$ vertices of degree at most $s$.

\item[(ii)] There exists an integer $t$ with $k\le t\le \left\lfloor\frac{n-2}{2}\right\rfloor$ such that $X$ contains $t+1$ vertices of degree at most $t$.
\end{itemize}
\end{lem}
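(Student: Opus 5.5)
Reduce to Lemma~\ref{le:ch-3.1} by adding one new vertex. Given $G=G[X,Y]$ with $|X|=n$, $|Y|=n-1$, form the balanced bipartite graph $G'$ of order $2n$ by adding a vertex $y_0$ to $Y$ adjacent to every vertex of $X$. Then $G'$ has a Hamilton cycle iff $G$ has a Hamilton path: deleting $y_0$ from a Hamilton cycle of $G'$ leaves a Hamilton path of $G$. Conversely, a Hamilton path of $G$ must have both ends in $X$, since $|X|=|Y|+1$, so it closes through $y_0$. Hence $G'$ is non-Hamiltonian.

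The degrees in $G'$ are:
\[
d_{G'}(x)=d_G(x)+1 \text{ for } x\in X,\qquad d_{G'}(y)=d_G(y) \text{ for } y\in Y,\qquad d_{G'}(y_0)=n.
\]
So $\delta(G')\ge k$, and Lemma~\ref{le:ch-3.1} gives an integer $s$ with $k\le s\le\lfloor n/2\rfloor$ such that $X$ or $Y\cup\{y_0\}$ contains $s$ vertices of degree at most $s$ in $G'$.

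\emph{Case $Y\cup\{y_0\}$.} Since $s\le n/2<n=d_{G'}(y_0)$, the vertex $y_0$ is not among these $s$ vertices. They are therefore $s$ vertices of $Y$ with $G$-degree at most $s$, which is alternative (i).

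\emph{Case $X$.} We have $s$ vertices of $X$ with $d_G(x)\le s-1$. Put $t=s-1$. The lower bound $t\ge k$ follows from $d_G(x)\ge k$, which forces $s-1\ge k$. Also $t\le \lfloor n/2\rfloor-1=\lfloor (n-2)/2\rfloor$. So $X$ contains $t+1$ vertices of $G$-degree at most $t$, which is alternative (ii).

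The main risk is that Lemma~\ref{le:ch-3.1} is stated as a black box and might have been applied with an off-by-one mismatch. The checks that need care are:
\begin{itemize}
\item Lemma~\ref{le:ch-3.1} needs $n\ge 2$, which holds.
\item The range of $t$ must match $\lfloor (n-2)/2\rfloor$ exactly, as verified above.
\item The case $s=\lfloor n/2\rfloor=n/2$ must not select $y_0$; this is excluded because $d_{G'}(y_0)=n>s$.
\end{itemize}
If any of these failed, I would instead rerun the closure argument of Lemma~\ref{le:ch-3.1} directly on $\operatorname{cl}_B(G')$. I do not expect this to be needed, since $y_0$ is already adjacent to all of $X$ and so gains no edges in the closure.
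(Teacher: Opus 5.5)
Your proposal is correct and follows the paper's proof: add a vertex to $Y$ joined to all of $X$, use the Hamilton path versus Hamilton cycle equivalence, apply Lemma~\ref{le:ch-3.1} to $G'$, and split into the two cases. In the $Y$-side case you exclude the new vertex via $d_{G'}(y_0)=n>s$, and in the $X$-side case you shift $t=s-1$ and check both bounds on $t$, exactly as the paper does.
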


\begin{proof}
Add a new vertex $y'$ and put $Y'=Y\cup\{y'\}$. Let $G'$ be the balanced
bipartite graph with bipartition $(X,Y')$ obtained from $G$ by adding all
edges between $y'$ and the vertices of $X$.

We first observe that $G$ has a Hamilton path if and only if $G'$ has a
Hamilton cycle. Indeed, if $G'$ has a Hamilton cycle, then deleting the
vertex $y'$ from this cycle gives a Hamilton path of $G$. Conversely, if
$G$ has a Hamilton path, then, since $|X|=|Y|+1$, the two endvertices of
this path both lie in $X$. Adding the vertex $y'$ and the two edges from $y'$ to the endvertices of the path gives a Hamilton cycle of $G'$.

Since $G$ has no Hamilton path, the graph $G'$ has no Hamilton cycle.
Moreover, $G'$ satisfies $\delta(G')\ge k$, since the degrees of vertices
in $Y$ are unchanged, the degrees of vertices in $X$ increase by one, and
$d_{G'}(y')=n\ge k$.
By Lemma~\ref{le:ch-3.1}, applied to the balanced bipartite graph $G'$,
there exists an integer $r$ with $k\le r\le \left\lfloor\frac n2\right\rfloor$ such that one of the two partite sets $X$ and $Y'$ contains $r$ vertices whose degrees in $G'$ are at most $r$.

Suppose first that $Y'$ contains such a set of $r$ vertices. Since
$d_{G'}(y')=n>r$, the vertex $y'$ is not contained in this set. Hence the
set is contained in $Y$. Moreover, the degrees of vertices of $Y$ are the
same in $G$ and in $G'$. Thus $Y$ contains $r$ vertices of degree at most
$r$ in $G$. Taking $s=r$ gives case {\rm (i)}.

Now suppose that $X$ contains $r$ vertices whose degrees in $G'$ are at
most $r$. For every $x\in X$, the vertex $x$ is adjacent to $y'$ in
$G'$, and therefore $d_G(x)=d_{G'}(x)-1$. Hence these $r$ vertices of $X$ have degree at most $r-1$ in $G$. Put
$t=r-1$. Then $X$ contains $t+1$ vertices of degree at most $t$. Since $r\le \left\lfloor\frac n2\right\rfloor$, we have $t=r-1\leq\left\lfloor\frac{n-2}{2}\right\rfloor$. Moreover, since $\delta(G)\ge k$, the existence of a vertex of degree at
most $t$ implies $t\ge k$. This gives case {\rm (ii)}. This completes the proof.
\end{proof}

\begin{proof}[Proof of Theorem~\ref{th:ch-1.8}]
Set
$$
M=
\max\left(
\left\{\mathcal P(S_n^s): k\le s\le \left\lfloor \frac n2\right\rfloor\right\}
\cup
\left\{\mathcal P(T_n^t): k\le t\le \left\lfloor \frac{n-2}{2}\right\rfloor\right\}
\right).
$$

Let $G=G[X,Y]$ be an extremal non-traceable nearly balanced bipartite
graph of order $2n-1$ with $\delta(G)\ge k$ such that
$$
\mathcal P(G)=
\ex_{\mathcal P}(2n-1,P_{2n-1};\delta\ge k),
$$
where $|X|=n$ and $|Y|=n-1$. By
Lemma~\ref{le:ch-4.1}, one of the following two cases occurs.

\medskip
\noindent\textbf{Case 1.} There exists an integer $s$ with $k\le s\le \left\lfloor n/2\right\rfloor$ and a set $Y_2=\{y_1,\ldots,y_s\}\subseteq Y$ such that $d_G(y_i)\le s$ for every $i=1,\ldots,s$.

Choose an arbitrary subset $X_1\subseteq X$ with $|X_1|=s$, and put
$$
X_2=X\setminus X_1,\qquad Y_1=Y\setminus Y_2.
$$
Then
$$
|X_1|=s,\quad |X_2|=n-s,\quad |Y_1|=n-s-1,\quad |Y_2|=s.
$$

Let $H$ be the graph obtained from $G$ by adding all missing edges between
$X$ and $Y_1$. Then $H[X,Y_1]=K_{n,n-s-1}$. Since $Y_1\ne\emptyset$ and
every vertex of $Y_2$ remains adjacent to some vertex of $X$, the graph $H$ is connected.

The degrees of the vertices in $Y_2$ are unchanged, and hence $
d_H(y_i)=d_G(y_i)\le s$ for every $i=1,\ldots,s$. If $G\ne H$, then $G$ is a proper spanning subgraph of the connected graph $H$, and hence the additional assumption in the theorem gives $\mathcal P(G)<\mathcal P(H)$. Thus $\mathcal P(G)\le \mathcal P(H)$, with equality only if $G=H$.

Apply Lemma~\ref{le:ch-2.3} to $H$, viewed as a bipartite graph with parts $Y$ and $X$, with $U=Y_2$ and $W=X_1$. Then there exists a graph $\Gamma$ obtained from $H$ by a finite sequence of Kelmans operations inside the part $X$ such that $N_\Gamma(Y_2)\subseteq X_1$ and $d_\Gamma(y_i)=d_H(y_i)=d_G(y_i)$ for every $i=1,\ldots,s$.

Moreover, all graphs arising in this Kelmans sequence are connected.
Indeed, the complete bipartite subgraph $H[X,Y_1]=K_{n,n-s-1}$ is
preserved throughout the Kelmans operations, and every vertex of $Y_2$ remains adjacent to some vertex of $X$. Therefore the Kelmans monotonicity of
$\mathcal P$ on connected graphs gives $\mathcal P(H)\le \mathcal P(\Gamma)$.

Since $N_\Gamma(Y_2)\subseteq X_1$, the graph $\Gamma$ has no edge between
$X_2$ and $Y_2$. Hence $\Gamma$ is a subgraph of $S_n^s$ with respect to
the above partition. If $\Gamma\ne S_n^s$, then $\Gamma$ is a proper
spanning subgraph of the connected graph $S_n^s$, and hence the additional
assumption in the theorem gives $\mathcal P(\Gamma)<\mathcal P(S_n^s)$. Thus $\mathcal P(\Gamma)\le \mathcal P(S_n^s)$, with equality only if $\Gamma=S_n^s$. Consequently,
$$
\mathcal P(G)\le \mathcal P(H)\le \mathcal P(\Gamma)
\le \mathcal P(S_n^s)\le M.
$$

We record the equality consequence in this case. If $\mathcal P(G)=M$,
then all inequalities in the last display are equal. Hence $G=H$ and $\Gamma=S_n^s$. In particular, $G[X,Y_1]=K_{n,n-s-1}$. All Kelmans operations from $H$ to $\Gamma$ were performed inside $X$, so the degrees of vertices in $Y$ are preserved. Since $\Gamma=S_n^s$, every vertex of $Y_2$ has degree $s$ in $\Gamma$, and therefore $d_G(y)=s$ for every $y\in Y_2$. Since $G=H$ is connected and $G$ is non-traceable, we have $G\in \mathcal S_n^s$.

\medskip
\noindent\textbf{Case 2.} There exists an integer $t$ with $k\le t\le \left\lfloor (n-2)/2\right\rfloor$ and a set $X_2=\{x_1,\ldots,x_{t+1}\}\subseteq X$ such that $d_G(x_i)\le t$ for every $i=1,\ldots,t+1$.

Choose an arbitrary subset $Y_1\subseteq Y$ with $|Y_1|=t$, and put
$$
Y_2=Y\setminus Y_1,\qquad X_1=X\setminus X_2.
$$
Then
$$
|X_1|=n-t-1,\quad |X_2|=t+1,\quad
|Y_1|=t,\quad |Y_2|=n-t-1.
$$

Let $H'$ be the graph obtained from $G$ by adding all missing edges between $X_1$ and $Y$. Then $H'[X_1,Y]=K_{n-t-1,n-1}$. Since $X_1\ne\emptyset$ and every vertex of $X_2$ remains adjacent to some vertex of $Y$, $H'$ is connected.
The degrees of the vertices in $X_2$ are unchanged, and hence
$d_{H'}(x_i)=d_G(x_i)\le t$ for every $i=1,\ldots,t+1$. If $G\ne H'$, then $G$ is a proper spanning subgraph of the connected graph $H'$, and hence the additional assumption in the theorem gives $\mathcal P(G)<\mathcal P(H')$. Thus $\mathcal P(G)\le \mathcal P(H')$, with equality only if $G=H'$.

Apply Lemma~\ref{le:ch-2.3} to $H'$ with $U=X_2$ and $W=Y_1$. Then there
exists a graph $\Gamma'$ obtained from $H'$ by a finite sequence of
Kelmans operations inside the part $Y$ such that $N_{\Gamma'}(X_2)\subseteq Y_1$ and $d_{\Gamma'}(x_i)=d_{H'}(x_i)=d_G(x_i)$ for every $i=1,\ldots,t+1$.

Moreover, all graphs arising in this Kelmans sequence are connected. Indeed, the complete bipartite subgraph $H'[X_1,Y]=K_{n-t-1,n-1}$ is preserved throughout the Kelmans operations, and every vertex of $X_2$ remains adjacent to some vertex of $Y$. Therefore the Kelmans monotonicity of $\mathcal P$ on connected graphs gives $\mathcal P(H')\le \mathcal P(\Gamma')$.

Since $N_{\Gamma'}(X_2)\subseteq Y_1$, the graph $\Gamma'$ has no edge between $X_2$ and $Y_2$. Hence $\Gamma'$ is a subgraph of $T_n^t$ with respect to the above partition. If $\Gamma'\ne T_n^t$, then $\Gamma'$ is a proper spanning subgraph of the connected graph $T_n^t$, and hence the additional assumption in the theorem gives $\mathcal P(\Gamma')<\mathcal P(T_n^t)$. Thus
$\mathcal P(\Gamma')\le \mathcal P(T_n^t)$, with equality only if $\Gamma'=T_n^t$. Consequently, 
$$
\mathcal P(G)\le \mathcal P(H')\le \mathcal P(\Gamma')
\le \mathcal P(T_n^t)\le M.
$$

We record the equality consequence in this case. If $\mathcal P(G)=M$, then all inequalities in the last display are equal. Hence $G=H'$ and $\Gamma'=T_n^t$. In particular, $G[X_1,Y]=K_{n-t-1,n-1}$. All Kelmans operations from $H'$ to $\Gamma'$ were performed inside $Y$, so the degrees of vertices in $X$ are preserved. Since $\Gamma'=T_n^t$, every vertex of $X_2$ has degree $t$ in $\Gamma'$, and therefore $d_G(x)=t$ for every $x\in X_2$. Since $G=H'$ is connected and $G$ is non-traceable, we have $G\in \mathcal T_n^t$.

In either case, $\mathcal P(G)\le M$. Therefore
$$
\ex_{\mathcal P}(2n-1,P_{2n-1};\delta\ge k)\le M.
$$

Conversely, each graph $S_n^s$ in the indicated range is connected and satisfies $\delta(S_n^s)=\min\{s,n-s-1\}\ge k$. Moreover, $S_n^s$ has no Hamilton path. Indeed, if $S_n^s$ had a Hamilton
path, then adding a new vertex to the smaller part and adding all edges from it to the larger part would produce a Hamilton cycle in the resulting balanced bipartite graph. However, in that balanced graph, deleting $X_1$ leaves the $s$ vertices of $Y_2$ as isolated vertices together with one further component. Thus the remaining graph has $s+1>|X_1|$ components, contradicting the existence of such a Hamilton cycle.

Similarly, each graph $T_n^t$ in the indicated range is connected and satisfies $\delta(T_n^t)=\min\{t,n-t-1\}\ge k$. It also has no Hamilton path. Indeed, if $T_n^t$ had a Hamilton path, then
after adding a new vertex to the smaller part and adding all edges from it to the larger part, the resulting balanced bipartite graph would have a Hamilton cycle. But deleting $Y_1$ together with the new vertex leaves the $t+1$ vertices of $X_2$ as isolated vertices together with one further component. Hence we obtain $t+2>|Y_1|+1$ components after deleting $|Y_1|+1=t+1$ vertices, contradicting the existence of such a Hamilton cycle.

Thus all graphs appearing in the definition of $M$ are admissible for $\ex_{\mathcal P}(2n-1,P_{2n-1};\delta\ge k)$. Hence $\ex_{\mathcal P}(2n-1,P_{2n-1};\delta\ge k)\ge M$. Combining the two inequalities gives $\ex_{\mathcal P}(2n-1,P_{2n-1};\delta\ge k)=M$, which is the desired formula for $\ex_{\mathcal P}(2n-1,P_{2n-1};\delta\ge k)$.

Finally, since the extremal graph $G$ chosen at the beginning was arbitrary, we have $\mathcal P(G)=M$. Hence the equality consequence recorded in Case 1 or Case 2 applies to $G$. Therefore
$$
\EX_{\mathcal P}(2n-1,P_{2n-1};\delta\ge k)
\subseteq
\left(
\bigcup_{k\le s\le \left\lfloor n/2\right\rfloor}
\mathcal S_n^s
\right)
\cup
\left(
\bigcup_{k\le t\le \left\lfloor (n-2)/2\right\rfloor}
\mathcal T_n^t
\right).
$$
This completes the proof.
\end{proof}

For later use, we record the quotient matrices of $S_n^s$ for $1\le s\le n-2$. Let $X=X_1\cup X_2$ and $Y=Y_1\cup Y_2$ be the bipartition of $S_n^s$,
where
\[
|X_1|=s,\quad |X_2|=n-s,\quad |Y_1|=n-s-1,\quad |Y_2|=s.
\]
With respect to the equitable partition $X_1,X_2,Y_1,Y_2$, the adjacency
and signless Laplacian quotient matrices are
\[
B_A(s)=
\begin{pmatrix}
0&0&n-s-1&s\\
0&0&n-s-1&0\\
s&n-s&0&0\\
s&0&0&0
\end{pmatrix},
\qquad
B_Q(s)=
\begin{pmatrix}
n-1&0&n-s-1&s\\
0&n-s-1&n-s-1&0\\
s&n-s&n&0\\
s&0&0&s
\end{pmatrix}.
\]
Set
$$
T_s=n(n-1)-s(n-s),\qquad
D_s=s^2(n-s)(n-s-1),
$$
and define
$$
p_s(z)=z^2-T_sz+D_s.
$$
Also define
$$
g_s(x)=x^3-(3n-2)x^2+
\bigl(2n^2+2ns-3n-2s^2-s+1\bigr)x
-s(2n-1)(n-s-1).
$$
A direct calculation gives
$$
\det(xI-B_A(s))=p_s(x^2)
$$
and
$$
\det(xI-B_Q(s))=xg_s(x).
$$
Thus, if $\mu_s=\rho(S_n^s)^2$, then $\mu_s$ is the larger root of $p_s(z)=0$. 
Moreover, if $\theta_s=q(S_n^s)$, then $\theta_s$ is the largest root of $g_s(x)=0$.

\begin{lem}\label{le:ch-4.2}
Let $n\ge 2k+1$ and $k\ge1$. Then
\[
\max_{k\le s\le n-k-1}\rho(S_n^s)=\rho(S_n^k),
\]
and equality is attained only when $s=k$. In particular,
\[
\max\left(
\{\rho(S_n^s):k\le s\le \lfloor n/2\rfloor\}
\cup
\{\rho(T_n^t):k\le t\le \lfloor(n-2)/2\rfloor\}
\right)
=
\rho(S_n^k),
\]
and equality in this maximum is attained only by $S_n^k$.
\end{lem}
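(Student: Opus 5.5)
We want to show that $\mu_s=\rho(S_n^s)^2$, the larger root of $p_s(z)=z^2-T_sz+D_s$, is maximized over $k\le s\le n-k-1$ uniquely at $s=k$. The second assertion then follows from the first via the identification $T_n^t\cong S_n^{n-t-1}$. Indeed, for $k\le t\le\lfloor (n-2)/2\rfloor$ we have $n-t-1\in[\lceil n/2\rceil,\,n-k-1]$, and for $k\le s\le\lfloor n/2\rfloor$ we have $s\le n-k-1$ because $n\ge 2k+1$. So every graph in the combined family is some $S_n^{s'}$ with $k\le s'\le n-k-1$. Moreover $T_n^t$ is never $S_n^k$, since $n-t-1=k$ would force $t=n-k-1>\lfloor (n-2)/2\rfloor$.

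The key step is a comparison lemma: for $k<s\le n-k-1$ we show $\mu_s<\mu_k$. The clean way is to evaluate $p_s$ at $\mu_k$. Since $\mu_s$ is the larger root of the monic quadratic $p_s$, it suffices to prove
\[
p_s(\mu_k)>0 \quad\text{and}\quad \mu_k>T_s/2 .
\]
Write
\[
p_s(\mu_k)=p_s(\mu_k)-p_k(\mu_k)=(T_k-T_s)\mu_k+(D_s-D_k).
\]
Here $T_k-T_s=s(n-s)-k(n-k)=(s-k)(n-s-k)\ge 0$, with equality only when $s=n-k$. That value lies outside the range, so $T_k-T_s>0$. The sign of $D_s-D_k$ is unclear, since $D_s=s^2(n-s)(n-s-1)$ is not monotone. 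So we need a lower bound on $\mu_k$ that beats $(D_k-D_s)/(T_k-T_s)$.

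For this lower bound we use crude estimates. First, $\mu_k\ge T_k/2$. Second, $\mu_k\ge T_k-D_k/T_k$, which holds because the larger root $r$ satisfies $r=T_k-D_k/r\ge T_k-D_k/T_k$ once we know $r\le T_k$. Also $\rho(S_n^k)^2$ is at least the largest degree product, giving for instance $\mu_k\ge (n-1)k+\ldots$; a simple subgraph bound is $\rho^2\ge \rho(K_{k,n-1})^2=k(n-1)$ together with $\rho^2\ge \rho(K_{n,n-k-1})^2=n(n-k-1)$.

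The condition to verify is
\[
(s-k)(n-s-k)\,\mu_k> D_k-D_s .
\]
We would factor $D_k-D_s$ and attempt to extract the factor $(s-k)$. Since $D$ is a quartic polynomial in $s$, we have $D_k-D_s=(s-k)R(s,k,n)$ with $R$ a cubic. The claim then reduces to
\[
(n-s-k)\mu_k>R .
\]
Using $\mu_k\ge n(n-k-1)$ and bounding $R$ on the interval should handle most of the range. The hard part is near $s=n-k-1$, where $n-s-k=1$ is small. There we would instead use directly that $D_s\ge D_k$ when $s\le n-k-1$ is close to the symmetric point. Note the symmetry $D_{n-1-s'}$ versus $D_{s'}$: the values $D_s$ for $s$ and $n-1-s$ are not equal, but $s(n-s-1)$ is symmetric, and $D_s=[s(n-s-1)]\cdot s(n-s)$. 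Checking $D_s-D_k\ge 0$ on the upper portion via this factorization looks like the route, combined with $T_k>T_s$ to get $p_s(\mu_k)>0$.

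The condition $\mu_k>T_s/2$ follows from $\mu_k\ge T_k/2$ and $T_k>T_s$.

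For $q$, the same scheme applies to the cubic $g_s$. We show $g_s(\theta_k)>0$ and that $\theta_k$ exceeds the other roots of $g_s$. The difference $g_s(x)-g_k(x)$ is linear in $x$ plus a constant, both divisible by $(s-k)$, and $\theta_k\ge n-1+1$-type bounds (from $\Delta+1$) control the sign. The main obstacle throughout is the non-monotone constant term ($D_s$, respectively $s(2n-1)(n-s-1)$) near the top end $s=n-k-1$. We would treat this by splitting the range into $s\le\lfloor n/2\rfloor$ and $s>\lfloor n/2\rfloor$, and in the latter range comparing with the reflected index.
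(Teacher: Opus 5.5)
There is a genuine gap. Your framework matches the paper's: you evaluate $p_s$ at $\mu_k$, require $p_s(\mu_k)>0$ and $\mu_k>T_s/2$, and reduce $T_n^t$ to $S_n^{n-t-1}$ with $\lceil n/2\rceil\le n-t-1\le n-k-1$. Your argument $\mu_k\ge T_k/2>T_s/2$ and the reduction are fine. The gap is the decisive inequality $p_s(\mu_k)>0$, which you never prove. You leave it at ``should handle most of the range'' through an uncomputed cubic $R$, plus an unspecified split near $s=n-k-1$. This rests on a mistaken premise: the sign of $D_s-D_k$ is not unclear, because $D_s\ge D_k$ for every $k\le s\le n-k-1$.

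The fix is to use your own factorization on the whole interval. Write $D_s=[s(n-s)]\cdot[s(n-s-1)]$. Both factors are concave in $s$, so on $[k,n-k-1]$ each is minimized at an endpoint.
\begin{itemize}
\item The factor $s(n-s-1)$ is symmetric about $(n-1)/2$ and equals $k(n-k-1)$ at both $s=k$ and $s=n-k-1$.
\item The factor $s(n-s)$ equals $k(n-k)$ at $s=k$. At $s=n-k-1$ it equals $(k+1)(n-k-1)=k(n-k)+(n-2k-1)\ge k(n-k)$.
\end{itemize}
Hence $D_s\ge D_k$ throughout the range. Then $p_s(\mu_k)=(s-k)(n-s-k)\mu_k+(D_s-D_k)>0$ for all $k<s\le n-k-1$. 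This needs no lower bound on $\mu_k$ beyond positivity and no case split; it is exactly the paper's argument.

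Two of your auxiliary estimates are also wrong and should be dropped.
\begin{itemize}
\item From $r=T_k-D_k/r$ and $r\le T_k$ you get $D_k/r\ge D_k/T_k$. So $\mu_k\le T_k-D_k/T_k$, the reverse of what you claimed.
\item $\rho^2$ is not bounded below by the largest degree product over an edge. In $S_n^k$ the edges between $X_1$ and $Y_1$ have degree product $n(n-1)$, yet $\mu_k<T_k<n(n-1)$ since both roots of $p_k$ are positive.
\end{itemize}
Your closing paragraph on $g_s$ concerns the signless Laplacian lemma, not this statement.
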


\begin{proof}
Use the notation introduced above. We show that $\mu_s<\mu_k$ for every
$s>k$ with $s\le n-k-1$. Since $S_n^k$ properly contains
$K_{n,n-k-1}\cup kK_1$, we have $\mu_k>n(n-k-1)$.

Let $f(u)=u^2(n-u)(n-u-1)$. For $k\le u\le n-k-1$, the concave functions
$u(n-u)$ and $u(n-u-1)$ attain their minima at the endpoints of this
interval. Hence $u(n-u)\ge k(n-k)$ and
$u(n-u-1)\ge k(n-k-1)$, so $f(u)\ge f(k)$.

Now fix $s>k$. Since $p_k(\mu_k)=0$, we have
$$
p_s(\mu_k)
=
p_s(\mu_k)-p_k(\mu_k)
=
(T_k-T_s)\mu_k+D_s-D_k.
$$
By the definitions of $T_s$ and $D_s$, we have
$$
T_k-T_s=s(n-s)-k(n-k)=(s-k)(n-s-k)
$$
and
$$
D_s-D_k=f(s)-f(k).
$$
Hence
$$
p_s(\mu_k)=(s-k)(n-s-k)\mu_k+f(s)-f(k).
$$
Since $s>k$, $s\le n-k-1$, and $f(s)\ge f(k)$, we get $p_s(\mu_k)>0$.
Moreover,
\[
p_s'(\mu_k)=2\mu_k-T_s
>
2n(n-k-1)-T_s
=
n(n-2k-1)+s(n-s)>0.
\]
Thus $\mu_k$ lies to the right of the larger root of $p_s$, and hence
$\mu_s<\mu_k$. Therefore $\rho(S_n^s)<\rho(S_n^k)$ for every
$s>k$ with $s\le n-k-1$, proving the first assertion and its equality case.

Finally, $T_n^t\cong S_n^{\,n-t-1}$. If
$k\le t\le\lfloor(n-2)/2\rfloor$ and $s'=n-t-1$, then
$k<s'\le n-k-1$. Hence
\[
\rho(T_n^t)=\rho(S_n^{s'})<\rho(S_n^k).
\]
The asserted maximum over the union and the uniqueness of the equality case follow. This proves the lemma.
\end{proof}

\begin{lem}\label{le:ch-4.3}
Let $n\ge 2k+1$ and $k\ge1$. Then
\[
\max_{k\le s\le n-k-1}q(S_n^s)=q(S_n^k),
\]
and equality is attained only when $s=k$. In particular,
\[
\max\left(
\{q(S_n^s):k\le s\le \lfloor n/2\rfloor\}
\cup
\{q(T_n^t):k\le t\le \lfloor(n-2)/2\rfloor\}
\right)
=
q(S_n^k),
\]
and equality in this maximum is attained only by $S_n^k$.
\end{lem}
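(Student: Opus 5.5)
Following the proof of Lemma~\ref{le:ch-4.2}, we work with the cubic $g_s$ and show that $\theta_k=q(S_n^k)$ lies strictly to the right of every root of $g_s$ whenever $k<s\le n-k-1$. Then $\theta_s<\theta_k$, and the statement about the union follows as before from $T_n^t\cong S_n^{\,n-t-1}$: for $k\le t\le\lfloor(n-2)/2\rfloor$ we have $k<n-t-1\le n-k-1$.

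First we need a lower bound for $\theta_k$. The graph $S_n^k$ properly contains $K_{n,n-k-1}$ together with isolated vertices, so Perron--Frobenius gives $\theta_k>q(K_{n,n-k-1})=2n-k-1$. A cruder bound that may be easier to use is that $S_n^k$ has a vertex of degree $n-1$ (those in $X_1$), so $\theta_k\ge n$, in fact with strict inequality.

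The main step is to compare $g_s$ and $g_k$ at $\theta_k$. Since $g_k(\theta_k)=0$, we have $g_s(\theta_k)=g_s(\theta_k)-g_k(\theta_k)$. In this difference the cubic and quadratic terms cancel, because $x^3$ and $(3n-2)x^2$ do not depend on $s$. Thus
\[
g_s(x)-g_k(x)=\bigl[2n(s-k)-2(s^2-k^2)-(s-k)\bigr]x-\bigl[h(s)-h(k)\bigr],
\]
where $h(u)=u(2n-1)(n-u-1)$. The linear coefficient factors as $(s-k)(2n-2s-2k-1)$, and $h(s)-h(k)=(2n-1)(s-k)(n-1-s-k)$. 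Hence
\[
g_s(\theta_k)=(s-k)\bigl[(2n-2s-2k-1)\theta_k-(2n-1)(n-1-s-k)\bigr].
\]
Write $m=n-1-s-k\ge0$. The bracket equals $(2m+1)\theta_k-(2n-1)m=\theta_k+m(2\theta_k-2n+1)$, which is positive as soon as $\theta_k>n-\tfrac12$. The bound $\theta_k\ge n$ from the first step gives this, so $g_s(\theta_k)>0$ for all $k<s\le n-k-1$.

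It remains to rule out $\theta_k$ lying between roots of $g_s$, which is the main obstacle. All roots of $g_s$ are eigenvalues of $Q(S_n^s)$ and hence real and nonnegative. Positivity of $g_s(\theta_k)$ therefore only places $\theta_k$ either above the largest root $\theta_s$ or between the two smaller roots. To exclude the second case, we show that the second largest root of $g_s$ is less than $n$. The quotient $B_Q(s)$ has eigenvalue $0$, and its trace is $3n-2$, so the remaining two roots sum to $3n-2-\theta_s$. Since $\theta_s>2n-s-1\ge n$, this sum is below $2n-2$, which is not yet enough on its own. A cleaner route is to show $g_s(n)<0$ and $g_s'(x)>0$ for $x\ge n$ by direct substitution. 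Alternatively, $g_s(n)<0$ together with $g_s(x)\to+\infty$ and $g_s(0)<0$ forces exactly one root above $n$. The evaluation $g_s(n)$ is a routine polynomial in $n$ and $s$. Combining this with $\theta_k>n$ and $g_s(\theta_k)>0$ gives $\theta_k>\theta_s$, which completes the proof.
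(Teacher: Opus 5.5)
Your setup, the lower bound $\theta_k>2n-k-1$, and the identity
\[
g_s(\theta_k)=(s-k)\bigl[(2m+1)\theta_k-(2n-1)m\bigr]>0,\qquad m=n-1-s-k,
\]
match the paper exactly. The gap is in the final step. You correctly identify it as the main obstacle, but you do not actually close it.

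Your ``cleaner route'' rests on a false claim. It is not true that $g_s'(x)>0$ on $[n,\infty)$. A direct computation gives $g_s'(n)=-(n-s)(n-s-1)-(s^2-1)$, which is negative throughout the relevant range. For example, with $n=5$, $s=2$ one has $g_2(x)=x^3-13x^2+46x-36$ and $g_2'(5)=-9$; here $g_2$ only becomes increasing at about $6.19$.

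Your alternative route is a non sequitur as stated. The conditions $g_s(0)<0$, $g_s(n)<0$ and $g_s\to+\infty$ do not force exactly one root above $n$, because they are equally consistent with all three roots exceeding $n$. Moreover, the value $g_s(n)$, on which the whole argument hinges, is never computed.

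The repair is short. Expanding gives $g_s(n)=-(n-s)(n-s-1)<0$ for $s\le n-2$. The three roots are real, as you noted, and they sum to $3n-2<3n$ (read off from the $x^2$-coefficient), so they cannot all exceed $n$. Hence the two smaller roots lie below $n$ and only $\theta_s$ lies above it. Since $\theta_k>n$ and $g_s(\theta_k)>0$, it follows that $\theta_k>\theta_s$.

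The paper closes this step differently, by proving monotonicity on the smaller interval $[2n-k-1,\infty)$. There $g_s''(x)=6x-6n+4>0$. Substituting $h=s-k$ and $r=n-s-k-1$ shows that $g_s'(2n-k-1)$ is a polynomial in $h,k,r$ with positive coefficients. So $g_s$ is increasing on an interval containing $\theta_k$, and $g_s(\theta_k)>0$ excludes any root $\ge\theta_k$.

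The two routes trade different things. Your repaired route needs only the weak bound $\theta_k>n$ and the clean closed form of $g_s(n)$, but it uses realness of the roots and the root sum. The paper's route needs the stronger bound $\theta_k>2n-k-1$ and a messier evaluation of $g_s'$, but it requires no information about the two smaller roots.
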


\begin{proof}
Use the notation introduced above. Since $S_n^k$ properly contains
$K_{n,n-k-1}\cup kK_1$, we have
\[
\theta_k>q(K_{n,n-k-1})=2n-k-1.
\]
Let $s>k$ with $s\le n-k-1$, and put $r=n-s-k-1\ge0$. Since
$g_k(\theta_k)=0$, we get
$$
g_s(\theta_k)
=
(s-k)\bigl((2r+1)\theta_k-(2n-1)r\bigr).
$$
As $\theta_k>2n-k-1$, it follows that
$$
g_s(\theta_k)
>
(s-k)\bigl((2n-2k-1)r+2n-k-1\bigr)>0.
$$

It remains to show that $g_s$ has no root larger than $\theta_k$. We have
$g_s''(x)=6x-6n+4>0$ for $x\ge 2n-k-1$. Writing $h=s-k$, a direct
calculation gives
\[
g_s'(2n-k-1)
=
2h^2+4hk+6hr+4h+k^2+4kr+3k+2r^2+3r+1>0.
\]
Hence $g_s$ is strictly increasing on $[2n-k-1,\infty)$. Since
$\theta_k>2n-k-1$ and $g_s(\theta_k)>0$, the largest root of $g_s$ is
smaller than $\theta_k$. Therefore
$q(S_n^s)<q(S_n^k)$ for every $s>k$ with $s\le n-k-1$, proving the first
assertion and its equality case.

Finally, $T_n^t\cong S_n^{\,n-t-1}$. If
$k\le t\le\lfloor(n-2)/2\rfloor$ and $s'=n-t-1$, then
$k<s'\le n-k-1$. Hence
\[
q(T_n^t)=q(S_n^{s'})<q(S_n^k).
\]
The asserted maximum over the union and the uniqueness of the equality case follow. This proves the lemma.
\end{proof}

\begin{lem}\label{le:ch-4.4}
Let $1\le s\le \left\lfloor n/2\right\rfloor$ and let
$G\in\mathcal S_n^s$. Then
\[
\rho(G)\le \rho(S_{n}^{s})
\qquad\text{and}\qquad
q(G)\le q(S_{n}^{s}),
\]
with equality only when $G\cong S_{n}^{s}$.

Similarly, let $1\le t\le \left\lfloor (n-2)/2\right\rfloor$ and let
$G\in\mathcal T_n^t$. Then 
\[
\rho(G)\le \rho(T_{n}^{t})
\qquad\text{and}\qquad
q(G)\le q(T_{n}^{t}),
\]
with equality only when $G\cong T_{n}^{t}$.
\end{lem}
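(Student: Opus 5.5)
The plan is to show that every $G\in\mathcal S_n^s$ is a spanning subgraph of a copy of $S_n^s$, with the same bipartition. Perron--Frobenius then gives the inequalities for both $\rho$ and $q$, and strictness whenever the inclusion is proper; the $\mathcal T_n^t$ case is handled the same way. So everything reduces to the structural claim that the neighborhoods of the low-degree vertices coincide.

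Fix $G\in\mathcal S_n^s$ with $Y_2\subseteq Y$, $|Y_2|=s$, $G[X,Y_1]=K_{n,n-s-1}$ and $d_G(y)=s$ for all $y\in Y_2$. Put $Z=N_G(Y_2)\subseteq X$, so $|Z|\ge s$.

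\textbf{Main step.} I claim that $|Z|=s$. Suppose instead that $|Z|\ge s+1$. I will build the auxiliary balanced graph $G'$ as in Lemma~\ref{le:ch-4.1}, adding $y'$ adjacent to all of $X$, and show that $\operatorname{cl}_B(G')=K_{n,n}$. Theorem~\ref{th:ch-2.1} then makes $G'$ Hamiltonian, hence $G$ traceable, a contradiction. The closure computation copies the equality argument in the proof of Theorem~\ref{th:ch-1.5}, with the roles of $X$ and $Y$ swapped.
\begin{itemize}
\item For $x\in Z$: $d_{G'}(x)\ge (n-s-1)+1+1=n-s+1$. For $y\in Y_2$: $d_{G'}(y)=s$. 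So every such non-edge $xy$ has degree sum at least $n+1$ and is added. After that, each $y\in Y_2$ has degree $|Z|\ge s+1$.
\item For $x\in X\setminus Z$: $d_{G'}(x)=n-s$. So each pair $x,y$ with $y\in Y_2$ now has degree sum at least $n+1$, and those edges are added too.
\end{itemize}
The only vertices of $Y'=Y\cup\{y'\}$ that were not already complete to $X$ are those in $Y_2$, so the closure is complete. Hence $|Z|=s$. Since each $y\in Y_2$ has degree $s$, $N_G(y)=Z$ for every $y\in Y_2$.

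With $X_1:=Z$, $G$ contains every edge between $X$ and $Y_1$ and between $X_1$ and $Y_2$, and no edge between $X_2$ and $Y_2$. Therefore $G\cong S_n^s$, which gives equality.

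For $G\in\mathcal T_n^t$ the argument is symmetric. Let $W=N_G(X_2)\subseteq Y$, where $|X_2|=t+1$, each vertex of $X_2$ has degree $t$, and $G[X_1,Y]=K_{n-t-1,n-1}$. Suppose $|W|\ge t+1$, and again close $G'$.
\begin{itemize}
\item For $x\in X_2$: $d_{G'}(x)=t+1$. For $y\in W$: $d_{G'}(y)\ge n-t-1+1=n-t$. Each such non-edge has degree sum at least $n+1$ and is added, after which each $x\in X_2$ has degree at least $t+2$.
\item For $y\in Y\setminus W$: $d_{G'}(y)=n-t-1$, giving degree sum at least $n+1$, so these edges are added as well.
\end{itemize}
Again $\operatorname{cl}_B(G')=K_{n,n}$, a contradiction. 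So $|W|=t$, all vertices of $X_2$ have neighborhood $W$, and $G\cong T_n^t$.

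This gives equality in all four inequalities, and the lemma holds in the strong form that $\mathcal S_n^s=\{S_n^s\}$ and $\mathcal T_n^t=\{T_n^t\}$ up to isomorphism.

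The only delicate step is the closure computation. There one must track that degrees only increase during closure, so bounds on the initial degrees of $Z$ (respectively $W$) remain valid, and that the order of additions is immaterial because the bipartite closure is well defined.
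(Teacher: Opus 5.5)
Your proof is correct, but it takes a genuinely different route from the paper's proof of Lemma~\ref{le:ch-4.4}.

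\textbf{The paper's route.} The paper never uses non-traceability here. It takes the Perron vector of $A(G)$ (resp.\ $Q(G)$) and lets $X_1$ be the $s$ vertices of $X$ with largest coordinates. It then moves every edge between $Y_2$ and $X_2$ over to $X_1$; each move does not decrease the Rayleigh quotient, and the process ends at $S_n^s$. For equality, it uses the eigenvalue equations of $S_n^s$ to show that the Perron coordinates on $X_1$ strictly exceed those on $X_2$, so no nontrivial switch is compatible with equality. The $\mathcal T_n^t$ case is handled symmetrically. That argument bounds $\rho$ and $q$ for every connected graph with the completeness/degree pattern of $\mathcal S_n^s$ or $\mathcal T_n^t$, traceable or not. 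However, it is specific to these two spectral parameters.

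\textbf{Your route.} You use the fact that non-traceability is built into the definition of the families. You then run the Bondy--Chv\'atal closure argument from the equality part of the proof of Theorem~\ref{th:ch-1.5} on the auxiliary graph $G'$ of Lemma~\ref{le:ch-4.1}. Your degree counts are right:
\begin{itemize}
\item in the $\mathcal S$ case, $d_{G'}(x)\ge n-s+1$ on $Z$ and $d_{G'}(x)=n-s$ off $Z$;
\item in the $\mathcal T$ case, $d_{G'}(x)=t+1$ on $X_2$, $d_{G'}(y)\ge n-t$ on $W$, and $d_{G'}(y)=n-t-1$ on $Y\setminus W$.
\end{itemize}
Since degrees only grow during closure, the conclusion $\operatorname{cl}_B(G')=K_{n,n}$ does not depend on the order of additions.

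\textbf{What your route buys.} You show that $\mathcal S_n^s=\{S_n^s\}$ and $\mathcal T_n^t=\{T_n^t\}$ up to isomorphism. This is stronger than the lemma and independent of the parameter. The Perron--Frobenius step announced in your first paragraph is never actually needed, and the lemma holds for any graph parameter. It would also let the extremal-family statement of Theorem~\ref{th:ch-1.8} be sharpened to $\{S_n^s\}\cup\{T_n^t\}$, exactly parallel to Theorem~\ref{th:ch-1.5}. That would make the spectral switching lemma unnecessary for Theorem~\ref{th:ch-1.9} and make Sections 3 and 4 methodologically uniform.
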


\begin{proof}
We first prove the assertion for $\mathcal S_n^s$. Let
$G=G[X,Y]\in\mathcal S_n^s$. By the definition of $\mathcal S_n^s$, there
exists a set $Y_2\subseteq Y$ with $|Y_2|=s$ such that, putting
$Y_1=Y\setminus Y_2$, we have $G[X,Y_1]=K_{n,n-s-1}$ and $d_G(y)=s$ 
for each $y\in Y_2$.

First consider the adjacency spectral radius. Let $\mathbf x$ be the
positive Perron vector of $G$. Choose a set $X_1\subseteq X$ with
$|X_1|=s$ such that the vertices in $X_1$ have the largest
$\mathbf x$-coordinates among all vertices of $X$, and put
$X_2=X\setminus X_1$.

Suppose that there exist $y\in Y_2$, $u\in X_2$, and $v\in X_1$ such that
$uy\in E(G)$ and $vy\notin E(G)$. Let $G'$ be obtained from the current
graph by deleting the edge $uy$ and adding the edge $vy$. Since
$x_v\ge x_u$, we have
$$
\mathbf x^T A(G')\mathbf x-\mathbf x^T A(G)\mathbf x
=
2x_y(x_v-x_u)\ge0.
$$
Thus this switching does not decrease the adjacency Rayleigh quotient with
respect to $\mathbf x$.

We repeat this operation as long as there is an edge between $Y_2$ and
$X_2$. The process terminates after finitely many steps, because each
switch decreases the number of edges between $Y_2$ and $X_2$. At the end,
every vertex of $Y_2$ has all its neighbors in $X_1$. Since every vertex of
$Y_2$ has degree $s$ and $|X_1|=s$, every vertex of $Y_2$ is adjacent to
all vertices of $X_1$. Together with $G[X,Y_1]=K_{n,n-s-1}$, the resulting
graph is $S_n^s$ with respect to the partition
$X=X_1\cup X_2$ and $Y=Y_1\cup Y_2$. Therefore $\rho(G)\le \rho(S_n^s)$.

Now suppose that equality holds. Then equality must hold at every step in
the above Rayleigh quotient comparison. In particular, if a nontrivial
switch from $u\in X_2$ to $v\in X_1$ was made, then $x_v=x_u$. 
Moreover, equality in the final Rayleigh quotient implies that the original vector $\mathbf x$ is also a Perron vector of the final graph $S_n^s$.

In $S_n^s$, the Perron coordinates are constant on each of the four classes
$X_1,X_2,Y_1,Y_2$. Let these coordinates be $x_1,x_2,y_1,y_2$, respectively.
Then the eigenvalue equations for vertices in $X_1$ and $X_2$ give
$$
\rho x_1=(n-s-1)y_1+s y_2
$$
and
$$
\rho x_2=(n-s-1)y_1.
$$
Since $y_2>0$, we get $x_1>x_2$. This contradicts the equality condition
$x_v=x_u$ for any nontrivial switch from $X_2$ to $X_1$. Hence no
nontrivial switch can occur under equality. Therefore $G\cong S_n^s$.

Next consider the signless Laplacian spectral radius. Let $\mathbf z$ be
the positive Perron vector of $Q(G)$. Choose again a set $X_1\subseteq X$
with $|X_1|=s$, now so that the vertices in $X_1$ have the largest
$\mathbf z$-coordinates among all vertices of $X$, and put
$X_2=X\setminus X_1$.

Perform the same switching process. If an edge $uy$ with $u\in X_2$ and
$y\in Y_2$ is replaced by an edge $vy$ with $v\in X_1$, then the signless
Laplacian Rayleigh quotient changes by
$$
(z_v+z_y)^2-(z_u+z_y)^2.
$$
Since $z_v\ge z_u$, this quantity is nonnegative. Hence the switching
process does not decrease the signless Laplacian Rayleigh quotient. As
above, the final graph is $S_n^s$, and therefore $q(G)\le q(S_n^s)$.

Suppose that equality holds for $q$. Then every nontrivial switch must have
zero Rayleigh quotient increment, and hence it must satisfy $z_v=z_u$.
Moreover, the original vector $\mathbf z$ must also be a Perron vector of
$Q(S_n^s)$.

Let $z_1,z_2,w_1,w_2$ be the signless Laplacian Perron coordinates of
$S_n^s$ on $X_1,X_2,Y_1,Y_2$, respectively. The eigenvalue equations for
vertices in $X_1$ and $X_2$ give
$$
(q-n+1)z_1=(n-s-1)w_1+s w_2
$$
and
$$
(q-n+s+1)z_2=(n-s-1)w_1.
$$
Since $w_2>0$, and since the first equation implies $q-n+1>0$, we have
$$
z_1
=
\frac{(n-s-1)w_1+s w_2}{q-n+1}
>
\frac{(n-s-1)w_1}{q-n+s+1}
=
z_2.
$$
Thus the Perron coordinate on $X_1$ is strictly larger than that on $X_2$.
This contradicts the equality condition $z_v=z_u$ for any nontrivial switch
from $X_2$ to $X_1$. Hence equality for $q$ also forces
$G\cong S_n^s$.

The proof for $\mathcal T_n^t$ is analogous, but we spell out the switching
direction. Let $G=G[X,Y]\in\mathcal T_n^t$. By the definition of
$\mathcal T_n^t$, there exists a set $X_2\subseteq X$ with $|X_2|=t+1$
such that, putting $X_1=X\setminus X_2$, we have $G[X_1,Y]=K_{n-t-1,n-1}$ and $d_G(x)=t$ for every $x\in X_2$.

For the adjacency spectral radius, choose a set $Y_1\subseteq Y$ with $|Y_1|=t$ consisting of the vertices of $Y$ with the largest coordinates in the positive Perron vector of $G$, and put $Y_2=Y\setminus Y_1$. Whenever some $x\in X_2$ is adjacent to $u\in Y_2$ and nonadjacent to $v\in Y_1$, replace the edge $xu$ by $xv$. This does not decrease the adjacency Rayleigh quotient. Repeating the switching process gives the canonical graph $T_n^t$. Hence
$\rho(G)\le \rho(T_n^t)$. If equality holds and a nontrivial switch was made, then the switched vertices in $Y$ must have equal Perron coordinates in the final graph $T_n^t$. But in $T_n^t$, if $x_1,x_2,y_1,y_2$ denote the adjacency Perron coordinates on $X_1,X_2,Y_1,Y_2$, respectively, then
$$
\rho y_1=(n-t-1)x_1+(t+1)x_2
$$
and
$$
\rho y_2=(n-t-1)x_1.
$$
Since $x_2>0$, we have $y_1>y_2$. Thus equality is impossible after a
nontrivial switch, and hence equality holds only when $G\cong T_n^t$.

For the signless Laplacian spectral radius, choose $Y_1$ with respect to
the positive Perron vector of $Q(G)$ and perform the same switching. Each
switch from $u\in Y_2$ to $v\in Y_1$ changes the signless Laplacian Rayleigh
quotient by a nonnegative amount
$$
(z_x+z_v)^2-(z_x+z_u)^2.
$$
Thus $q(G)\le q(T_n^t)$.
For the equality case, let $z_1,z_2,w_1,w_2$ be the signless Laplacian
Perron coordinates of $T_n^t$ on $X_1,X_2,Y_1,Y_2$, respectively. The
eigenvalue equations for vertices in $Y_1$ and $Y_2$ give
$$
(q-n)w_1=(n-t-1)z_1+(t+1)z_2
$$
and
$$
(q-n+t+1)w_2=(n-t-1)z_1.
$$
Since $z_2>0$, and since the first equation implies $q-n>0$, we have
$$
w_1=\frac{(n-t-1)z_1+(t+1)z_2}{q-n}>\frac{(n-t-1)z_1}{q-n+t+1}=w_2.
$$
Thus a nontrivial switch is incompatible with equality. Therefore equality for $q$ holds only when $G\cong T_n^t$.

This completes the proof.
\end{proof}

\begin{proof}[Proof of Theorem~\ref{th:ch-1.9}]
Let $\lambda\in\{\rho,q\}$. Since $n\ge 2k+1$, we have
$1\le k\le \lfloor (n-1)/2\rfloor$. Both $\rho$ and $q$ are feasible graph
parameters. Moreover, for $\mathcal P=\rho$ or $\mathcal P=q$, the condition $\mathcal P(H)<\mathcal P(G)$ whenever $H$ is a proper spanning subgraph of a connected graph $G$ follows from the Perron--Frobenius Theorem. Since $S_{n}^{k}$ is non-traceable and
$\delta(S_{n}^{k})=k$, we have
\[
\spex_{\lambda}^{\mathrm{NB}}(2n-1,P_{2n-1};\delta\ge k)
\ge \lambda(S_{n}^{k}).
\]

Let $G$ be a non-traceable nearly balanced bipartite graph with part
sizes $n$ and $n-1$ and minimum degree at least $k$. Applying Theorem~\ref{th:ch-1.8} with $\mathcal P=\lambda$, we get 
\[
\lambda(G)\le
\max\left(
\{\lambda(S_{n}^{s}): k\le s\le \lfloor n/2\rfloor\}
\cup
\{\lambda(T_{n}^{t}): k\le t\le \lfloor (n-2)/2\rfloor\}
\right).
\]
By Lemma~\ref{le:ch-4.2} when $\lambda=\rho$, and by
Lemma~\ref{le:ch-4.3} when $\lambda=q$, the right-hand side is
$\lambda(S_{n}^{k})$. Hence
\[
\spex_{\lambda}^{\mathrm{NB}}(2n-1,P_{2n-1};\delta\ge k)
=
\lambda(S_{n}^{k}).
\]

The equality case follows from the equality part of
Theorem~\ref{th:ch-1.8}, Lemma~\ref{le:ch-4.4}, and the
uniqueness in Lemma~\ref{le:ch-4.2} or
Lemma~\ref{le:ch-4.3}. Therefore equality holds only when
$G\cong S_{n}^{k}$, and so
\[
\SPEX_{\lambda}^{\mathrm{NB}}(2n-1,P_{2n-1};\delta\ge k)
=
\{S_{n}^{k}\}.
\]
Taking $\lambda=\rho$ and $\lambda=q$ gives the two assertions. This completes the proof.
\end{proof}

\section*{Declaration of competing interest}

There is no competing interest.

\section*{Acknowledgements}

This research was supported in part by the National Key Research and
Development Program of China (No. 2023YFA1009604), the National Natural Science Foundation of China
(No. 12371350), and the Fundamental
Research Funds for the Central Universities, Nankai University
(No. 63243151). The author is deeply grateful to Professor Bo Ning of
Nankai University for his invaluable guidance and many helpful discussions.
The problem studied in this paper was proposed by Professor Ning during a
seminar, and his insightful suggestions played an important role in shaping
this work.

\section*{Data availability}

No data was used for the research described in the article.

\end{document}